\documentclass[12pt]{article}

\usepackage{url}  

\usepackage{arxiv_package}

\usepackage{graphicx}
\usepackage{amssymb,mathtools}
\usepackage{multirow}
\usepackage{diagbox}

\DeclareRobustCommand{\qzz}[2]{#2}
\soulregister{\qzz}{2}
\makeatletter
\pdfstringdefDisableCommands{\let\qzz\@firstofone}
\makeatother

\newcommand{\anonymize}[1]{}
%%%%%%%%%%%%%%%%%%%%%%%%%%%%%%

\hypersetup{
    colorlinks=true,
    citecolor = blue,
    linkcolor=blue
}

\setlength{\parindent}{0pt}
\setlength{\parskip}{10pt}

\usepackage{dsfont}
\usepackage{comment}

\usepackage{cleveref}
\usepackage[toc,page,titletoc]{appendix}

\usepackage{algorithm}
\usepackage{algpseudocode}

% \setlength\parindent{0pt} % sets indent to zero
%\setlength{\parskip}{10pt} % changes vertical space between paragraphs

% \makeatletter
% \newcommand{\pushright}[1]{\ifmeasuring@#1\else\omit\hfill$\displaystyle#1$\fi\ignorespaces}
% \newcommand{\pushleft}[1]{\ifmeasuring@#1\else\omit$\displaystyle#1$\hfill\fi\ignorespaces}
% \makeatother

\title{Multiple conditional randomization tests for lagged and
  spillover treatment effects}

\author[]{Yao Zhang\vspace{-11pt}\thanks{We thank Jos Twisk and Ye Wang
    for sharing the trial data with us. We thank
    Stephen Bates, Jing Lei, and Paul Rosenbaum for reading and
    commenting on a previous version of this manuscript.} \\
    Department of Statistics, Stanford University  \vspace{2pt}
    \\ and  \\  Qingyuan Zhao\thanks{Partly supported by EPSRC grant   
      EP/V049968/1.} \\ Statistical Laboratory, University of Cambridge}

\begin{document}

\maketitle

\begin{abstract}
  We consider the problem of constructing multiple independent
conditional randomization tests using a single dataset. Because the
tests are independent, the randomization p-values can be interpreted
individually and combined using standard methods for multiple
testing. We give a simple, sequential construction of such
tests, and then discuss its application to three problems: Rosenbaum's
evidence factors for observational studies, lagged treatment effect in
stepped-wedge trials, and spillover effect in randomized trials with
interference. We compare the proposed approach with some existing
methods using simulated and real datasets. Finally, we establish a
more general sufficient condition for independent conditional
randomization tests.

%%% Local Variables:
%%% mode: latex
%%% TeX-master: t
%%% End:

\end{abstract}

%\noindent%
%{\it Keywords:}  Randomization tests; Stepped-wedge design; Interference.

\section{Introduction}\label{new_intro}
The growth of randomized experiments in social and biomedical sciences
has created numerous opportunities for the application of
randomization tests. These tests, dating back to
\citet{fisher1935design} and \citet{pitman1937significance}, are based
solely on the physical act of randomization, enabling them to yield
p-values that are exact in finite samples without relying on any
distributional assumptions. Variations of randomization tests have
also found wide applications in non-randomized settings in which
modelling the full data distribution is prohibitively difficult; see
\citet{zhangzhao2022} for a recent review.

Many modern randomized experiments have a complex design, and
it may be challenging to design simple randomization tests. This is a
particularly difficult problem when the
causal hypothesis is not strong enough to impute all possible
potential outcomes. There is much methodological progress in the last
decade on this problem by conditioning on a suitable statistic
(usually a function of the treatment). Some notable examples of {\it
  conditional randomization tests} (CRTs) include those for
meta-analysis \citep{zheng2008multi}, covariate adjustment
\citep{hennessy2016conditional}, and unit interference
\citep{athey2018exact,basse2019randomization,puelz2019graph}. Here,
conditioning can be viewed as a way to increase the
precision of hypothesis testing. The theory for a single CRT is briefly
reviewed in \Cref{sec:single-crt} below.

In practice, it is often of interest to conduct multiple CRTs to
increase power or assess different causal hypotheses. This is the
problem we will study in this paper. Before outlining our proposal, we
will give three examples where multiple CRTs could be useful.

\begin{example}[Evidence factors] \label{ex:evidence-factor}
  In a series of work,
  \citet{rosenbaum2010evidence,rosenbaum2017general,rosenbaum2021replication}
developed a concept called ``evidence factors'' for observational
studies, which are essentially pieces of evidence regarding
different aspects of an abstract causal hypothesis. Roughly speaking,
each evidence factor is characterized by a worst-case p-value in a
sensitivity analysis for a different (partially) sharp null
hypothesis. Rosenbaum's key observation is that in certain
settings, these evidence factors are ``nearly independent'' in the sense that the worst-case p-values stochastic dominate the
multivariate uniform distribution over the unit hypercube.
\end{example}

\begin{example}[Lagged treatment effect]\label{label:s_exp}
The stepped-wedge randomized controlled trial is a monotonic
cross-over design in which all units begin in the control group and subsequently cross over to the treatment group at staggered time
points \citep{hussey2007design}. This design is also known as
``staggered adoption'' in economics
\citep{abraham2018estimating,athey2018design}. It has
become a popular trial design in medical and policy research, especially in cases
where it is impractical to administer the treatment to all units at
the same time or where for ethical reasons the treatment should be
given to everyone since it has shown effectiveness already. Because
the
treatment is administered at different times, the stepped-wedge design
allows us to estimate time-lagged treatment effects. This is usually
done using mixed-effects models
\citep{hussey2007design,hemming2018reporting,li2021mixed}, but the
inference is not reliable when the models are misspecified
\citep{thompson2017bias,ji2017randomization}. Randomization tests have
been proposed as an alternative to test the global null hypothesis
that assumes no treatment effect whatsoever
\citep{ji2017randomization,wang2017use,thompson2018robust,hughes2020rob}. However,
to our knowledge, no randomization test for lagged treatment effects has
been developed in this context.
\end{example}

\begin{example}[Testing the range of spillover effect]\label{label:c_exp}
When experimental units are geographical regions or users
connected through a social network, it is often of interest to know if the effect of
a treatment can spill over to neighbouring units. For example,
\citet{jayachandran2017cash}  conducted a randomized controlled trial
to study if payments for ecosystem services can reduce deforestation
in Uganda.
After receiving these payments, forest owners were expected to engage
in forest conservation within their respective villages. However,
there was also a potential spillover effect, where these forest owners
may deforest more in nearby untreated villages.
% For example, \citet{collazos2021hot} conducted a large-scale randomized experiment to study if increasing daily police presence in some fixed streets can reduce the crime rate in Medell\'{i}n, Colombia. Their result suggests that %the crime rate also decreases significantly in the areas close to those treated streets.
 CRTs for spillover effects have been studied recently by, among
 others, \citet{aronow2012general}, \citet{athey2018exact},
 \citet{basse2019randomization}, and \citet{puelz2019graph}.
 However, a single CRT can only determine the presence of a spillover
 effect within a specified distance. Multiple CRTs can assess how far the treatment effect can spread so policymakers can determine, e.g., in the deforestation
 example, how to maximize forest conservation by allocating payments
 to selected villages.
\end{example}
%The rest of this article is organized as follows.  In \Cref{sec:single-crt}, we provide a brief review of a single CRT and then introduce the theoretical results that support our multiple CRTs methods used in the applications introduced above (\Cref{sec:swd,sec:spillover}). The main result is a sufficient condition under which the p-values of multiple CRTs can be jointly valid, meaning that their joint distribution is nearly uniform. In such cases, the tests can be combined by standard methods such as Fisher's method and can be
%easily extended to simultaneous testing using the closed testing procedure
%\citep{marcus1976closed}. We base our condition solely  on the conditioning events and test statistics of the CRTs. We place no restriction on the type of treatment variable, assignment mechanism and unit interference.
%In  \Cref{sec:experiment},  we further validate our methods through a set of simulations.
%\Cref{sec:discussion} concludes the article with some discussion on potential avenues for future research.

A common challenge in these examples is that a naive application of
existing CRTs for different causal hypotheses (e.g.\ different time
lags or spill-over distances) will give dependent p-values, which
complicates the decision. In this paper, we will propose a general,
sequential construction of conditional randomization tests that
generate ``nearly independent'' p-values. These p-values can be viewed as
independent pieces of evidence for different causal hypotheses (or
as ``evidence factors'' in Rosenbaum's terminology), and can be combined
using standard multiple testing methods such as Fisher's and
Stouffer's methods for testing the intersection of
the causal hypotheses
\citep{fisher1925statistical,stouffer1949american}, or Hommel's method
and more general closed testing procedures for
controlling the family-wise error rate \citep{marcus1976closed}.
% The rest of the article is organized as follows. In
% \Cref{sec:single-crt}, we briefly review the theory for a single
% CRT; more examples and discussion can be found in a separate review
% article \citep{zhangzhao2022}. In \Cref{sec:sequential-crts}, we
% introduce a simple yet general approach to construct nearly
% independent CRTs sequentially.
We will illustrate this sequential testing approach by applying it to
the problems in \Cref{ex:evidence-factor,label:s_exp,label:c_exp} and
use simulations to investigate the power of the proposed method. % In
% \Cref{sec:multiple-crt}, we give a more general sufficient condition
% for near independence that does not require a sequential construction
% of the CRTs.
Some concluding remarks will be offered at the end of the article.

\section{Review: A single conditional randomization test}
\label{sec:single-crt}

% This section provides some background on conditional randomization
% tests; A more thorough review can be found in an accompanying article
% by the authors.

% \subsection{A single CRT}

%We start with a brief review of conditional randomization tests; see
%\citet{zhangzhao2022} for more details and discussion.

Consider a randomized experiment on $N$ units. Let $\bm Z \in
\mathcal{Z}$ denote the treatment assignment that is randomized.
For every unit $i\in [N]:=\{1,\cdots,N \},$ we denote all its
\emph{potential outcomes} by $(Y_i(\bm z)\mid \bm z\in
\mathcal{Z})$. Let  $\bm Y(\bm z) = (Y_1(\bm z), \dotsc, Y_N(\bm z)).$
The collection of potential outcomes for all units and treatment
assignments is referred to as the \emph{potential outcomes schedule},
which is denoted by
$\bm W = (\bm Y (\bm z): \bm z \in
\mathcal{Z}) \in \mathcal{W}$.
After $\bm Z$ is randomized, we observe a single outcome  $Y_i$ for every
unit $i$.  We assume that all the observed outcomes $\bm Y =
(Y_1,\dotsc,Y_N)$ are consistent, meaning that they match the
potential outcomes for the realized assignment, i.e., $\bm Y = \bm Y(\bm Z)$.

Treatment effects such as lagged or spillover effects are often
defined as contrasts between potential outcomes. By randomizing $\bm
Z$, the experiment establishes independence between $\bm Z$ and $\bm
W$ that eliminates all confounding factors between the treatment and
the outcome. This heuristic is formalized by the assumption below.

\begin{assumption}[Randomized experiment] \label{assump:randomization}
  $\bm Z \independent \bm W$ and the density function $\pi(\cdot)$ of
  $\bm Z$ is known and positive everywhere in $\mathcal{Z}$.
\end{assumption}

% Independence between $\bm Z$ and $\bm W$ is justified by the
% randomization. We may view $\bm Z$ and $\bm W$ as defined on two
% separate probability spaces as $\bm W$ represents randomness in
% nature and consider the product probability space. If
% $\bm Z$ is randomized according to some $\bm X$ (possibly some
% observed attributes of the units), we may assume that $\bm Z
% \independent \bm W \mid \bm X$. To simplify the exposition, we will
% simply treat $\bm X$ as fixed, so $\bm Z \independent \bm W$ is true
% in the conditional probability space given $\bm X$.

In this paper, we are interested in testing causal
hypotheses under which some unknown entries of $\bm W$ can be imputed
from $\bm Y$. The simplest example is the global null:
for all assignments $\bm z,\bm z'\in \mathcal{Z}$, the potential
outcomes are identical, i.e., $\bm Y(\bm z) = \bm Y(\bm z')$. This is
also known as Fisher's exact null. Under such a fully sharp
hypothesis, all potential outcomes in  $\bm W$ can be imputed using
the observed outcome $\bm Y$. However, we are often interested in
testing more granular hypotheses that can only impute
part of $\bm W$. Such hypotheses are called "partially sharp" by \citet{zhangzhao2022}.  Conditioning is a useful tool to
exclude non-imputable potential outcomes. Specifically, a conditional
randomization test is defined as follows.

\begin{definition}\label{def:CRT}
  A \emph{conditional randomization test} (CRT) for the treatment  $\bm Z$ is defined by
  % \begin{enumerate}
  % \item\label{item:crt_1} A
  a finite partition $\mathcal{R} =
  \{\mathcal{S}_m\}_{m=1}^{M}$ of $\mathcal{Z}$ and
  a \emph{test statistics} $T = t(\bm Z, \bm W)$ where
  $t:\mathcal{Z} \times \mathcal{W} \to \mathbb{R}$ is a
  (measurable) function of $\bm Z$  and $\bm W$. With an abuse of notation, let
  $\mathcal{S}_{\bm z}$ be the set in the partition $\mathcal{R}$
  that contains $\bm z$. The \emph{p-value} of the CRT is then
  given by
  \begin{equation}
    \label{eq:p-value}
    P = p(\bm Z, \bm W) = \mathbb{P} \{t(\bm Z^{*}, \bm W)
    \leq t(\bm Z, \bm W) \mid \bm Z^{*}\in \mathcal{S}_{\bm
      Z}, \bm Z, \bm W \},
  \end{equation}
  where $\bm Z^{*}$ is an independent copy of $\bm Z$ conditional on
  $\bm W$, that is, $\bm Z^{*}$ has the same
  distribution as $\bm Z$ but is independent of $\bm Z$ given $\bm W$. % and the notation
                                % $\mathbb{P}^{*}$ is used to
                                % emphasize that
  % the probability is taken only over the randomness of $\bm Z^{*}$.
\end{definition}

In \eqref{eq:p-value}, the CRT compares the test statistic evaluated at the observed
treatment $\bm Z$ with all other potential treatment assignments $\bm
Z^{*}$ that are in the same equivalence class as $\bm Z$. The
equivalence class can be represented by a function $v: \mathcal{Z} \to
[M]$ such that $\mathcal{S}_m$ is the preimage of $\{m\}$, i.e.,
\begin{equation}\label{equ:s_m_v}
\mathcal{S}_m = v^{-1}(\{m\}) = \{\bm z \in \mathcal{Z}: v(\bm z) =
m\},\  \text{for all $m\in [M].$ }
\end{equation}
% \begin{equation}\label{equ:s_m_v}
% \mathcal{S}_m = \{\bm z \in \mathcal{Z}: v(\bm z) = v_m\}, \forall v_m\in \mathcal{V},
% \end{equation}
% where $M = |\mathcal{V}|.$
With $T^* = t(\bm Z^*, \bm W), V^*= v(\bm Z^*)$ and $V= v(\bm Z)$, we can rewrite the p-value in \eqref{eq:p-value} as
\begin{equation}\label{equ:p_m_v}
P = \mathbb{P} (T^*
    \leq T \mid V^{*} = V, \bm Z, \bm W ).
\end{equation}
%To create a CRT, we need to choose the partition $\mathcal{R}$ or the function $v$ according to the hypothesis being tested.

%We assume $M$ is finite in this article but note that it can be very large in some cases, e.g., $M$ is as large as the support of the distribution of $\bm Z_{[n]\setminus \mathcal{I}}$ in our example.
% For instance, we
% condition on  $\bm Z_{[n]\setminus
        % \mathcal{I}}^* = \bm Z_{[n]\setminus \mathcal{I}}$ when
        % testing a causal effect on a subgroup of units
        % $\mathcal{I}\subset [N]$. In this case, $M$ is large as the
        % support of $\bm Z_{[n]\setminus \mathcal{I}}$.
% In this article, we follow the convention to assume  $M$ is finite.

%Note that $\mathcal{S}_{\bm Z}$ does not need to be the orbit of $\bm Z$ generated by a group action as described in \citet[sec.\ 15.2]{lehmann2006testing}. Let
%$\mathcal{G} = \sigma\left( \mathcal{S}_{\bm Z} \right)$ denote the
%$\sigma$-algebra generated by $\mathcal{S}_{\bm
%  Z}$ (or equivalently $v(\bm Z)$); this is the information being conditioned on in the CRT.
Note that our p-value is a function of the potential outcomes
schedule $\bm W$ instead of the observed outcomes $\bm Y$.
This representation guarantees the CRT to be valid given $V$, as stated in the theorem below. With this in mind, the role of the null hypothesis is to impute the missing potential outcomes in $\bm W$ and compute the p-value, which is not always possible when the causal hypothesis is partially sharp. In this case, the conditioning statistic $V$ needs to be
carefully chosen. Examples of $V$ we will below include the (random) set of units that are treated at a given time point and the subvector $\bm Z_{\mathcal{I}}$ for some pre-selected subset $\mathcal{I}$.

\begin{theorem}\label{thm:valid}
  Under Assumption \ref{assump:randomization},  the p-value in
  \eqref{eq:p-value} satisfies, for any $\alpha \in [0,1],$
  \begin{equation}
    \label{eq:stoc-domi-sigma}
    \P\left\{ P(\bm Z, \bm W) \leq \alpha \mid V, \bm
      W\right\} := \sum_{m=1}^{M} 1_{\{\bm Z \in \mathcal{S}_m\}}
    \P\left\{ P(\bm Z, \bm W) \leq \alpha \mid
      \bm Z \in \mathcal{S}_m, \bm W\right\}
    \leq \alpha.
  \end{equation}
  In consequence, the CRT is valid in the sense that  $\mathbb{P}\left\{
    P(\bm Z, \bm W)\leq \alpha\right\}\leq \alpha$ for all
  $\alpha\in[0,1]$.
\end{theorem}

\Cref{thm:valid} is a well-known result in randomization inference, and a proof is included in
\Cref{sec:proof-crefl-2} for completeness.
By randomizing the rejection, the inequality
\eqref{eq:stoc-domi-sigma} can be replaced by equality; see \citet[eq.\
5.51]{lehmann2006testing}. For this reason, we refer to the
CRT as ``nearly exact'' and the multiple CRTs introduced below as
``nearly independent''.

\section{Sequential conditional randomization tests}
\label{sec:sequential-crts}
% We now outline a sequential construction of CRTs that underpins our
% further discussion on the examples in \Cref{new_intro}.
%We will provide a sequential construction of $K$ CRTs such that the corresponding p-values are nearly independent.

Next, we consider the problem of constructing $K$ CRTs for $K$
potentially different null hypotheses. Following \eqref{equ:s_m_v} and
\eqref{equ:p_m_v}, we define the CRTs using $K$ conditioning statistics:
\[
V^{(k)} = v^{(k)}(\bm Z) \  \text{ for some function $v^{(k)}$ on } \mathcal{Z},~k \in [K].
\]
The sequential CRTs are constructed by conditioning on more and more information as $k$ increases.
Specifically, we compute the p-value in the $k$th CRT using the distribution of
$V^{(k)}$ given $\bm V^{[k-1]}$ and a test statistics $t^{(k)}(\bm V^{[k]},\bm W)$:
\begin{equation}
  \label{eq:sequential-crt}
  P^{(k)} = p^{(k)}(\bm V^{[k]}, \bm W) = \mathbb{P} \left\{t^{(k)}(\bm V^{*[k]}, \bm W)
  \leq t^{(k)}(\bm V^{[k]}, \bm W) \mid \bm V^{*[k-1]} = \bm V^{[k-1]}, \bm Z, \bm W \right\},
\end{equation}
where $V^{*(k)} = v^{(k)}(\bm Z^*)$ and $\bm Z^*$ is an i.i.d. copy of
$\bm Z$.

The next result shows that this construction generates nearly
independent p-values regardless of how the conditioning statistics
$V^{(1)},\dots,V^{(K)}$ are chosen. Note that sometimes it is
desirable to condition additionally on a global statistic $V^{(0)}$
in all the tests. Then the test statistics $t^{(1)},\dots,t^{(K)}$ may
further depend on $V^{(0)} = v^{(0)}(\bm Z)$ and
equation \eqref{equ:dominance-3} below holds given $V^{(0)}$.

% For example, if the treatment $\bm z = (z_1, \dots, z_N)$ is a vector of the same dimension as the number of units, we may use
% $G^{(0)} = 0,~G^{(k)} = \bm z_{\mathcal{I}_k},~k \in [K]$,
% where $\mathcal{I}_1,\dots,\mathcal{I}_K$ is a partition of all the units $[N]$.

% the treatment $\bm z$ can be separated into
% $K+1$ components: $\bm z = (\bm z^{(1)},\dots,\bm z^{(K+1)}) \in
% \mathcal{Z}^{(1)} \times \dots \times \mathcal{Z}^{(K+1)}$. Importantly, we do not require $\bm Z^{(1)},\dots,\bm Z^{(K+1)}$ to be independent or even variationally independent; in fact, they can be arbitrary functions of $\bm Z$.

% The next result motivates a sequential construction of CRTs that
% achieves this goal. % A more general sufficient
% condition will be described in \Cref{sec:multiple-crt}.

% We write $ \mathcal{S}_{\bm z}^{(k)} =  \mathcal{S}_m^{(k)}$ for all
% $\bm z\in \mathcal{S}_m^{(k)}$. % We assume each $P^{(k)}(\bm Z, \bm W)$
% satisfies \eqref{eq:stoc-domi-sigma}, i.e.,
% \begin{equation}
%   \label{eq:stoc-domi}
%   \mathbb{P}\left\{ P^{(k)}(\bm Z, \bm W)\leq \alpha \mid \bm Z \in
%     \mathcal{S}_{\bm z}^{(k)}, \bm
%     W\right\}\leq \alpha, \quad \forall \alpha\in[0,1]  \text{ and } \bm z \in
%   \mathcal{Z}.
% \end{equation}
% Fix $m$ and $k\neq k'$,  $\mathcal{S}_m^{(k)}$ and $\mathcal{S}_m^{(k')}$  may be irrelevant to each other.
% This notation allows us to discuss overlapping conditioning sets from
% different CRTs. Consider the following result. % We call the CRTs sequential if their conditioning sets
% are nested as in the proposition below.
\begin{theorem} \label{prop:nested}
Under Assumption \ref{assump:randomization} and the setup above, the p-values $P^{(1)}, \dotsc,
  P^{(K)}$ defined in \eqref{eq:sequential-crt} are valid and
  nearly independent in the sense that
\begin{equation}\label{equ:dominance-3}
  \mathbb{P}\Big( P^{(1)} \leq \alpha^{(1)},\dotsc, P^{(K)} \leq \alpha^{(K)} \mid \bm W  \Big)
    \leq
  \prod_{k=1}^{K}\alpha^{(k)}~\text{for all $\alpha^{(1)},\dots,\alpha^{(K)} \in [0,1]$.}
\end{equation}
  % % \begin{enumerate}
  % % \item\label{item:6_1}
  %   $\mathcal{S}_{\bm z}^{(1)} \supseteq \dotsb \supseteq
  %   \mathcal{S}_{\bm z}^{(K)}$ for all $\bm z \in \mathcal{Z}$, and
  %   % \item\label{item:6_2}
  %   the function $T_{\bm z}^{(j)}(\bm z,
  %   \bm W)$ does not depends on $\bm z$ given that % when $\bm z$ is restricted to
  %   $\bm z \in \mathcal{S}_m^{(k)}$ for all % any $m$ and
  %   $1 \leq j<k \leq K$.
  % % \end{enumerate}
\end{theorem}
 % This can be seen from the proof below.
% To see this, let
% $\mathcal{G}^{(k)} =
% \sigma(\mathcal{S}_{\bm Z}^{(k)})$ denote the $\sigma$-algebra
% generated by the conditioning event in test $k$. The two conditions in
% \Cref{prop:nested} then imply that
% \[
%   \mathcal{G}^{(j)} \subseteq \mathcal{G}^{(k)} \quad
%   \text{and} \quad T_{\bm Z}^{(j)}(\bm Z, \bm W) \independent
%   \bm Z \mid \mathcal{G}^{(k)},~\text{for all}~ 1 \leq j < k \leq K.
% \]
% their generated $\sigma$-algebras satisfy that
% they use a nested sequence of partitions
% so that $\mathcal{S}_{\bm z}^{(1)}
% \supseteq \dotsb \supseteq \mathcal{S}_{\bm z}^{(K)}$ for all $\bm z$.

\begin{proof}%[of \Cref{prop:nested}]
 Suppose $K =
2$. Let $\psi^{(k)}(\bm V^{[k]}, \bm
W) = 1_{\{P^{(k)} \leq \alpha^{(k)}\}}$ be the test function.
% and $\mathcal{G}^{(k)} =
% \sigma({\bm Z}^{[k-1]})$ denote the information being conditioned on for the $k$th CRT. For any
% $\bm w \in \mathcal{W}$, the construction implies that $\psi^{(1)}(\bm Z, \bm
% w)$ only depends on $\bm Z^{(1)}$ and thus is $\mathcal{G}^{(2)}$-measurable.
%By definition, $\psi^{(1)}(\bm Z, \bm W)$ only depends on $\bm Z$ through $\bm V^{[1]}$.
By using the law of iterated
expectations and \Cref{thm:valid}, %, we have
\begin{align*}
  &\  \mathbb{P}\left\{P^{(1)} \leq \alpha^{(1)}, P^{(2)} \leq \alpha^{(2)} \ \Big|\ \bm W\right\} \\
  %=\ &  \mathbb{E}\left\{\psi^{(1)}(\bm Z, \bm W) \psi^{(2)}(\bm V, \bm W) \ %\Big| \
  %\bm V^{[1]},\bm W\right\} \\
  % =\ &  \mathbb{E}\left\{\mathbb{E}\left[\psi^{(1)}(\bm Z, \bm W) \psi^{(2)}(\bm Z, \bm W) \mid
  %      \mathcal{G}^{(2)}\right] \mid \mathcal{G}^{(1)}\right\} \\
  %=\ &
  =&\ \mathbb{E}\left[ \psi^{(1)}(\bm V^{(1)}, \bm W) \mathbb{E}\left\{ \psi^{(2)}(\bm V^{[2]}, \bm W)  \ \Big| \
       V^{(1)} \right\} \ \Big| \ \bm W\right] \\
  \leq&\  \alpha^{(2)} \mathbb{E}\left\{\psi^{(1)}(V^{(1)}, \bm W) \ \Big| \
         \bm W \right\} \\
  \leq&\   \alpha^{(1)} \alpha^{(2)}.
\end{align*}
By using the above argument sequentially, this proof easily
generalizes to $K > 2$.
\end{proof}

% The above proof still goes through if  $\psi^{(1)}(\bm Z, \bm
% w) \independent \psi^{(2)}(\bm Z, \bm w) \mid \mathcal{G}^{(2)}$
% holds, which is implied by the assumption that $\psi^{(1)}(\bm Z,
% \bm w)$ is $\mathcal{G}^{(2)}$-measurable. We will see in
% \Cref{sec:multiple-crt} that a more general theorem can be established
% without requiring that the CRTs are strictly nested/sequential.

%\Cref{prop:nested} motivates the following general construction of
%sequential CRTs.

In the next three sections, we will illustrate this
sequential construction of CRTs using the three examples introduced in
\Cref{new_intro}. Specifically, in \Cref{sec:evid-fact-observ} we will give a simple explanation of Rosenbaum's evidence factors for observational studies, without using any special structure of permutation groups as suggested by \citet{rosenbaum2017general}. In
\Cref{sec:swd,sec:spillover}, we will develop some new statistical
methods to test lagged and spillover treatment effects. In
\Cref{sec:multiple-crt}, we will give a more general theorem that does
not require a strictly sequential construction of CRTs.

\section{Example 1: Evidence factors for observational studies}
\label{sec:evid-fact-observ}

% Our theory assumes a randomized experiment, and it is interesting to
% study if the same theory also applies to randomization inference for
% observational studies. Matching by observed confounders is often used
% to mimic an randomized experiment using
% observational study. The complication here is that the matched sets
% are often a random subset of units that depend on the treatment
% assignment, so our theory cannot directly apply because it assumes
% $\mathcal{Z}$ is fixed. This subtle issue is usually ignored
% in the randomization inference literature, which typically treats
% the matched observational sample as a block-randomized experiment
% \citep{rosenbaum2002observational}.

\citet{rosenbaum2010evidence,rosenbaum2017general,rosenbaum2021replication}
developed a concept called ``evidence factors'' for observational
studies, which are essentially pieces of evidence regarding
different aspects of an abstract causal hypothesis. Roughly speaking,
each evidence factor is characterized by a p-value upper bound
for a different hypothesis and the factors together satisfy \eqref{equ:dominance-3}, as we detail below.
%obtained from sensitivity analyses for a different hypothesis. Rosenbaum's key observation is that in certain
%problems, these evidence factors are nearly independent in the sense
%that the p-value upper bounds satisfy \eqref{equ:dominance-3}, as we detail below.

Consider the example in
\citet{rosenbaum2017general} that concerns the causal effect of
smoking on periodontal disease. Each subject in this analysis
is categorized as a ``non-smoker'', ``light smoker'', or ``heavy
smoker''. The main idea of \citet{rosenbaum2017general} is to
represent exposure as two indicators, one for smoking and one for
heavy smoking. That is, let the exposure be $\bm Z = (\bm Z^{(1)},
\bm Z^{(2)}) $, where $\bm Z^{(1)}$ and $\bm Z^{(2)}$ are
$N$-dimensional binary variables, $Z^{(1)}_i = 1$ indicates subject $i$ is
a smoker, and $Z^{(2)}_i = 1$ indicates subject $i$ is a
heavy smoker. Rosenbaum then constructed a
randomization test by comparing the outcomes of the smokers with the
non-smokers, and another randomization test by comparing the outcomes
of the heavy smokers with the light smokers.

Rosenbaum showed that the two tests are nearly independent by expressing the permutation group of $(\bm Z_1, \dots, \bm Z_n)$ as a knit product of two smaller permutation groups, one for $(Z^{(1)}_1, \dots, Z^{(1)}_N)$ and one for $(Z^{(2)}_1, \dots, Z^{(2)}_N)$. A group $G$ is said to be the knit product of two subgroups $G_1$ and $G_2$, if every element $g \in G$ can be uniquely represented as $g = g_1 g_2$ where $g_1 \in G_1$ and $g_2 \in G_2$. Intuitively, any permutation of the exposure status of the subjects can be obtained by first permuting the non-smokers with the smokers, then permuting the light smokers with the high smokers (within the set of smokers). Thus, the two permutation tests should be independent. Because smoking is not truly randomized in an observational study, \citet{rosenbaum2017general} matched pairs of smokers and non-smokers in terms of age, gender, education, income, and ethnicity before running the randomization tests.

%. He then constructed independent sensitivity analyses using the two randomization tests.
Rosenbaum's observation is intuitive, but the argument is tailored to
permutation tests. The mathematical proof in
\citet{rosenbaum2017general} is also quite involved. In view of our
theory in \Cref{sec:sequential-crts}, this can be simplified, as Rosenbaum's two evidence factors are exactly the
sequence of CRTs in \Cref{sec:sequential-crts} with $\bm V^{(1)} = \bm
Z^{(1)} $ and $\bm V^{(2)} = \bm Z^{(2)}$. By \Cref{prop:nested}, the
p-value upper bounds are nearly independent.
%This can be further extended to sensitivity analyses almost effortlessly.
Suppose the first CRT assumes a collection $\Pi^{(1)}$ of
randomization distributions of $\bm Z^{(1)}$ and the second CRT
assumes a collection $\Pi^{(2)}$ of randomization distributions of
$\bm Z^{(2)}$ given $\bm Z^{(1)}$. Rosenbaum obtained closed-form
expressions of the p-value upper bounds
\[
  P^{(k)} = \sup_{\pi^{(k)} \in \Pi^{(k)}} p^{(k)}(\bm
  Z, \bm W; \pi^{(k)}),~k=1,2,
\]
where $p^{(k)}(\bm Z, \bm W; \pi^{(k)})$ is the p-value of the
$k$th CRT under the distribution $\pi^{(k)}$.
It directly follows from \Cref{prop:nested} that these p-values are valid and nearly independent for any distribution in $\pi = (\pi^{(1)}, \pi^{(2)}) \in \Pi^{(1)} \times \Pi^{(2)}$, where $\pi(\bm z) = \pi^{(1)}(\bm z^{(1)}) \cdot \pi^{(2)}(\bm z^{(2)} \mid \bm z^{(1)})$. That is,
\begin{equation}\label{equ:paul_argument}
%\begin{split}
\sup_{\pi \in \Pi^{(1)} \times \Pi^{(2)}} \mathbb{P}_{\pi}\left(  P^{(1)} \leq
  \alpha^{(1)},P^{(2)} \leq \alpha^{(2)}  \right)  \leq \alpha^{(1)} \alpha^{(2)}. %\\
%\leq \ &  \sup_{\pi^{(1)} \in \Pi^{(1)},\pi^{(2)} \in \Pi^{(2)}}  \mathbb{P} \left\{ %P^{(1)}(\bm Z, \bm W; \pi^{(1)}) \leq
%         \alpha^{(1)},P^{(2)}(\bm Z, \bm W; \pi^{(2)}) \leq
%         \alpha^{(2)}  \right\} \\
%\leq \ & \alpha^{(1)} \alpha^{(2)}.
%\end{split}
\end{equation}
Thus, we can view $P^{(1)}$ and $P^{(2)}$ as independent pieces of
evidence about two related causal hypotheses on the effect of smoking
on periodontal disease. This argument does not rely on any specific
algebraic structure of permutation groups (such as the knit product
used by Rosenbaum) other than that they are constructed sequentially
as described in \Cref{sec:sequential-crts}.

\section{Example 2: Testing lagged treatment effects in stepped-wedge
  trials} \label{sec:swd}

%We next demonstrate the versatility of our theory using the
%stepped-wedge randomized controlled trials, Following our theory, we develop a method
%(\Cref{alg:mcrts}) to construct nearly independent CRTs for lagged
%treatment effects in \Cref{sec:sw-nested-crts}. We also consider
%methods to combine the tests in \Cref{sec:combine}.

%\subsection{Hypotheses for lagged treatment effect}

% We can understand the treatment effects more precisely by
% testing the hypotheses of lagged effects. As the sub-hypotheses of
% $H$, they can show us how soon the treatment effect takes place and
% fades away over time.

\subsection{Sequential tests for lagged treatment effects}
\label{sec:sw-nested-crts}

As introduced in \Cref{label:s_exp}, the stepped-wedge design
randomizes cross-over times and thus presents an opportunity to
investigate the time lag of treatment effect. In this section, we will
describe a sequential construction of CRTs that will allow us to pool
information from different time points. For simplicity, we assume that
the randomization happens at the unit level, and note here that the method
below is applicable to cluster-randomized trials by using
aggregated cluster-level outcomes
\citep{middleton2015unbiased,thompson2018robust}.

We will consider a stepped-wedge trial on $N$ units over an evenly
spaced time grid $[T] = \{1,\cdots,T\}$. At time $t$, a total of $N_t
\geq 0$ units cross over from control to treatment, and for simplicity
we assume $\sum_{t=1}^T N_t = N$, so all units are treated when the trial is finished. The cross-overs can be represented
by a binary $N \times T$ matrix $\bm Z$, where $Z_{it} =
1$ indicates that unit $i$ crosses over from control to treatment at
time $t$. Usually, the units stay in the treatment arm after crossing
over, but this framework also covers the case of ``one-shot
treatment'' by redefining treatment as whether the unit has received
the one-shot treatment already. We consider the simple scenario of
``complete randomization'', where $\bm Z$ is uniform assigned over
  $\mathcal{Z} = \left\{\bm z \in \{0,1\}^{N\times T}: \bm
    z\bm 1 = \bm 1, \bm 1^{\top} \bm z  = (N_1,\cdots,N_T)\right\}$
  ($\bm 1$ is a vector of ones).
Thus, the treatment assignment mechanism is given by $\pi(\bm z) = 1 /
|\mathcal{Z}|$ for $\bm z \in \mathcal{Z}$ where $|\mathcal{Z}| = N! /
(N_1! \dotsc N_T!)$.

After the cross-overs at time $t$,  the experimenter
immediately measures the outcomes of all units, denoted by
$\bm Y_t \in \mathbb{R}^N$. The corresponding vector of potential
outcomes under the assignment $\bm Z = \bm z$ is denoted by $\bm
Y_t(\bm z) = (Y_{1t}(\bm z), \dotsc, Y_{nt}(\bm z))$. Let $\bm Y =
(\bm Y_1,\dotsc, \bm Y_T)$ be all the realized outcomes and $\bm Y(\bm z)$ be the
collection of potential outcomes under treatment assignment $\bm z \in
\mathcal{Z}$. As before, we assume the observed outcomes are
consistent, i.e., $\bm Y = \bm Y(\bm Z)$. In this setting,
several authors have developed unconditional randomization tests for the null hypothesis of no treatment effect whatsoever
\citep{ji2017randomization, wang2017use, hughes2020rob}:
\begin{equation}
  \label{eq:swd-sharp-null}
  H: \bm Y(\bm z) = \bm Y(\bm z^{*}),\quad \forall \bm z, \bm z^{*} \in
  \mathcal{Z}, i \in [N], \text{ and } t\in [T].
\end{equation}
Below, we will focus on less restrictive hypotheses about lagged
treatment effects.

To make the problem more tractable, we assume no interference between
units and no anticipation effect from treatment assignment in the
future. See \citet{athey2018design} for some discussion on this
assumption.

\begin{assumption}[No interference or anticipation]\label{assumption:noanti}
  For all $i\in [N]$ and $t\in[T]$, $Y_{it}(\bm z)$ only depends on $\bm
  z$ through $\bm z_{i,[t]}$, i.e.\ the treatment history for unit $i$
  up to time $t$.
\end{assumption}

Given this assumption, we can rewrite $Y_{it}(\bm z)$ as
$Y_{it}(\bm z_{i,[t]})$. Given a fixed time lag $0 \leq l \leq
T - 1$, we have a sequence of hypotheses about lag-$l$ treatment
effect:
for $t = 1,\dotsc,T-l$,
\begin{equation}\label{equ:null_st}
  H^{(t)}: Y_{i,t+l}(\bm 0_{t-1},1,\bm 0_{l})- Y_{i,t+l}(\bm 0_{t+l}) = \tau_{l},\ \forall i\in [N].
\end{equation}
In words, $H^{(t)}$ means that crossing over from control to treatment at time $t$ has a constant
treatment effect $\tau_l$ on the outcome at time $t+l$.

Next, we will construct a sequence of CRTs for
$H^{(1)},\dotsc,H^{(T-l)}$ and combine them
% Before describing these tests, two points bear more emphasis.
% First,
% As these tests are constructed sequentially, they can be easily
% combined
to test the intersection hypothesis $\cap_t H^{(t)}$. This
intersection hypothesis essentially asserts that
the $l$-lagged treatment effect is always $\tau_l$, which is still
much weaker claim than the fully sharp hypothesis $H$ in
\eqref{eq:swd-sharp-null}.
Note that it is difficult to test the intersection $\cap_t H^{(t)}$ via a single CRT using outcomes across all time points.
For every unit $i$, there exists some time steps $t$ such that both outcomes in $H^{(t)}$ are unobserved. Therefore, the CRT using all the potential outcomes in $\cap_t H^{(t)}$ is
non-computable.
This motivates us to test each $H^{(t)}$ individually using outcomes at fewer time steps.

%To see this,
% Each $H^{(t)}$ can only impute the control outcomes $Y_{i,t+l}(\bm 0_{t+l})$ and the outcomes $Y_{i,t+l}((\bm 0_{t-1},1,\bm 0_{l}))$ treated exactly $l$ steps ago.
% If under $\bm z^{*}$, a unit crosses over at a time different from $\bm Z$, then all its treated outcomes (except the one treated  $l$ steps ago)  are non-imputable.
% We next consider testing each hypothesis $H^{(t)}$ via a CRT and then combine the p-values to test the intersection.

We first show that the most straightforward CRTs
for $H^{(1)},\dots,H^{(T-l)}$  are not independent.
The hypothesis $H^{(t)}$ in \eqref{equ:null_st} only
concerns the potential outcomes of $Y_{i,t+l}$ when the cross-over occurs
at time $t$ or after $t+l$. Thus, in order to test $H^{(t)}$, we can
only compare units that crossed over at time $t$ with those crossed
over after time $t+l$. We denote this random set of units as
\begin{equation}
  \label{eq:swd-naive-conditioning}
v^{(t)}(\bm Z) = \left\{i \in [N]: \bm Z_{i,[t+l]} = (\bm
    0_{t-1},1,\bm  0_{l}) \text{ or } \bm 0_{t+l} \right\}.
\end{equation}
%By definition, for any $i \in v^{(t)}(\bm Z)$ we observe either
%$Y_{i,t+l}((\bm 0_{t-1},1,\bm 0_{l}))$ or  $Y_{i,t+l}(\bm
%0_{t+l})$, and $H^{(t)}$ allows us to impute the other
%potential outcomes. We can then use a test statistic %that only depends
%on these potential outcomes. The conditioning statistic %$v^{(t)}(\bm
%Z)$ already collects all the units that we can use to test the hypothesis $H^{(t)}$.
% This defines the ``best'' CRT in the
% sense that all
% In fact, these are all the potential outcomes we
% can use to test
% $H^{(t)}$.
% We would let the test statistics depend solely on these
% outcomes and condition on  $ v^{(t)}(\bm Z^*) =  v^{(t)}(\bm Z)$ in
% our CRT.
% There is no better alternative because allowing $ v^{(t)}(\bm Z^*) \neq  v^{(t)}(\bm Z)$ make the test depend on {\it unknown} and {\it irrelevant}  outcomes.
Because the treatment assignment mechanism $\pi$ is uniform over
$\mathcal{Z}$, given the random set of units $v^{(t)}(\bm Z)$, the
treatment assignment $\bm Z$ is still uniformly distributed over all
permutations of the vectors $\bm Z_{i, [t+l]}$ for $i \in v^{(t)}(\bm
Z)$. This shows that under Assumptions \ref{assump:randomization} and
\ref{assumption:noanti}, the CRT given $ v^{(t)}(\bm Z)$ is a
two-sample permutation test between the ``treated'' group that crosses
over at time $t$ and the ``control'' group that crosses over after
time $t+l$.

\begin{figure}[t]
  \centering
  \begin{subfigure}[b]{\textwidth}  \centering
    \includegraphics[width = 0.8\textwidth]{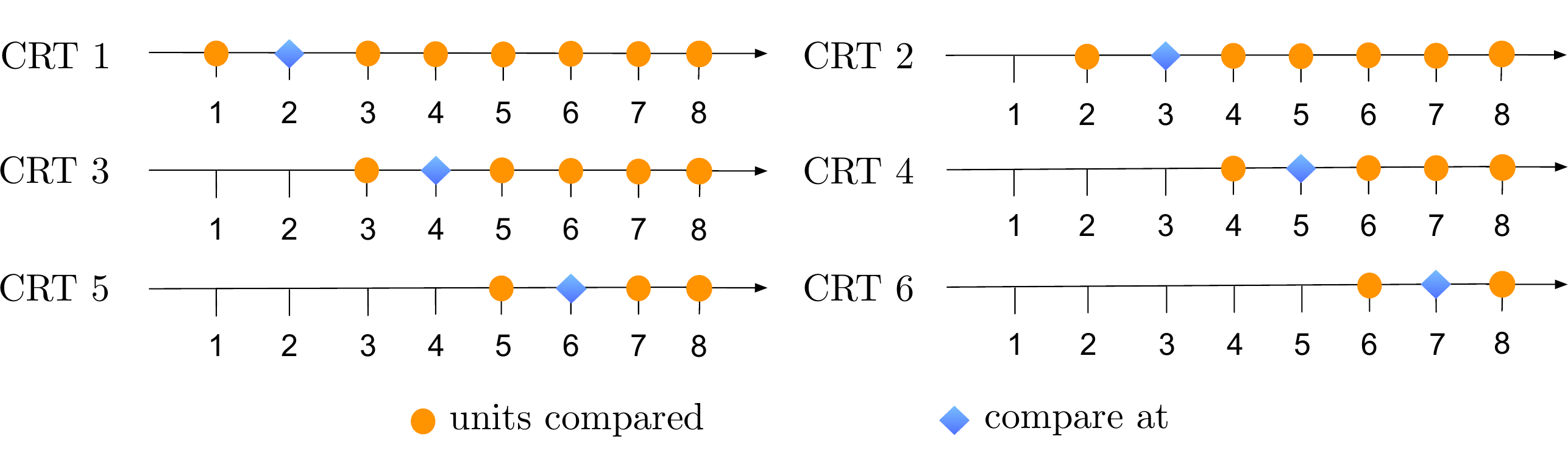}
    \caption{Non-nested CRTs.} \label{fig:swd-crt-1}
  \end{subfigure}
  \begin{subfigure}[b]{\textwidth}\centering
    \includegraphics[width = 0.8\textwidth]{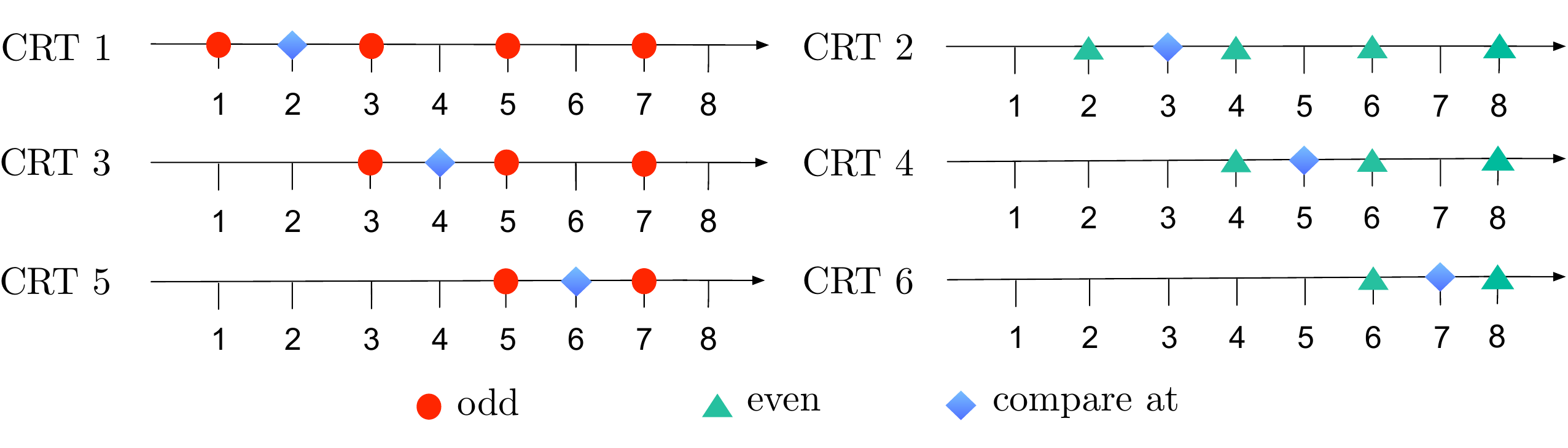}
    \caption{Nested CRTs.} \label{fig:swd-crt-2}
  \end{subfigure}
  \caption{Illustration of CRTs for lagged effect (lag
    $l=1$)
    in a stepped-wedge trial.}
  \label{fig:swd-crt}
\end{figure}

However, the above construction does not yield nearly independent CRTs
for $l \geq 1$. We visualize this issue with $l=1$ in
\Cref{fig:swd-crt-1}. In this case, the CRT for
$H^{(t)}$ is a permutation test that compares the time $t+1$ outcome
for  crossing over at $t$ with those crossing over after
$t+1$. Specifically, the first CRT compares cross-overs at time
$1$ with cross-overs at time $3,4,\dotsc$, the second CRT compares
cross-overs at time $2$ and cross-overs at time
$4,5\dotsc$, and so on. Thus, the conditioning statistics are not
nested in the sense that $\bm V^{(1)} = v^{(1)}(\bm Z)$ can not be
written as a function of $\bm V^{(2)} = v^{(2)}(\bm Z)$, and vice
versa. In other words, suppose we let the second CRT condition on
$% v^{(1)}(\bm Z^*) =
v^{(1)}(\bm Z)$ and $% v^{(2)}(\bm Z^*) =
v^{(2)}(\bm Z)$,
as prescribed by \Cref{prop:nested}.
The second CRT will have a degenerated and powerless randomization distribution
because the intersection  $v^{(1)}(\bm Z)\cap v^{(2)}(\bm Z)$
exactly identifies the units that crossover at time $2$.

% Another way to put this is that the conditional randomization
% distribution of $\bm Z$ given $\bm V^{(1)}$ and $\bm V^{(2)}$ is no
% longer given by \eqref{eq:swd-cond-dist}.

% {\color{blue} QZ: I rewrote the second half of this
%   paragraph, could you take a look?}
% {\color{orange}
%     I know your idea, but I doubt if the reviewer can get it immediately. I revised it.}
%Consequently, the conditioning events of CRT 1 and CRT 2 are
%not nested (i.e.\ they do not satisfy the first condition in
%\Cref{prop:nested}).
% the
% conditioning set of CRT 1 includes crossing over after time 2, which
% is exclusive of crossing over at time 2 in the conditioning set of CRT
% 2.
% This simple example shows that these CRTs are not independent.
% It also echoes our discussion above that no single CRT can be
% constructed to test the intersection of the hypotheses $H^{(t)},
% t=1,\dotsc,T$. Suppose we would like
% to test $H^{(1)} \cap H^{(2)}$ with lag $l=1$. % a statistic of
% % the form $T(\bm Z, \bm Y_2(\bm Z), \bm Y_3(\bm Z))$ that depends on
% %potential outcomes at $t = 2$ and $3$. % This requires us to be able
% % to impute $\bm Y_2(\bm Z^{*})$ and $\bm Y_3(\bm Z^{*})$ at least for
% % some $\bm Z^{*}$.
% Notice that the treated outcome of unit i at time $3$ is
% not imputable if unit i crosses over at time 1
% because the lag of the effect is
% $l=1$ not 2 (see CRT2 in \Cref{fig:swd-crt-1}). The units treated at time $1$
% must be excluded from the test. However, the dilemma is that they are exactly the units which experience lag 1 effect at time 2.

To resolve this problem, we can gain some intuition by considering the
$l=0$ case, where the above construction does generate independent
tests. Observe that the stepped-wedge trial can be viewed as a
sequentially randomized experiment: the assignments $\bm Z$ can be
thought of as randomly starting the treatment for $N_t$ untreated
units at every $t = 1,\dotsc,T$. More precisely,  let $\bm Z_t$ denote
the $t$-th column of $\bm Z$,
$\bm z_{[t]} = (\bm z_1, \dotsc, \bm z_t) \in \{0,1\}^{N \times t}$ and $N_{[t]} = \sum_{s=1}^t
N_s$. We can decompose $\pi$ as
\begin{equation}\label{equ:swd_sequence}
  \frac{N_1! \dotsb N_T!}{N!} =\pi(\bm z) = \pi_1(\bm z_1) \pi_2(\bm z_2 \mid \bm z_1) \dotsb
  \pi_T(\bm z_T \mid \bm z_{[T-1]}),
\end{equation}
where $\pi_t$ is a complete randomization that assigns $N_t$
out of $N - N_{[t-1]}$ units to treatment:
\begin{equation}\label{equ:swd_sequence_2}
\pi_t(\bm z_t \mid \bm z_{[t-1]}) = \frac{N_t! (N - N_{[t]})!}{(N -
    N_{[t-1]})!}, ~\forall \\ \bm{z}_t \in \{0,1\}^{N}
  \text{ such that } (\bm 1_N - \bm z_{[t-1]} \bm 1_{t-1})^T \bm z_t = N_t,
\end{equation}
i.e.\ $\bm Z_t$ is uniformly distributed over all assignments with
$N_t$ crossovers. In the case of $l=0$, the conditioning statistics $
\bm V^{(t)} = \bm Z_t$ are nested: the first CRT compares cross-overs at time $1$ with
cross-overs at time $2,3,\dotsc$, the second CRT compares cross-overs
at time $2$ with cross-overs at time $3,4\dotsc$, and so on. By
\Cref{prop:nested}, these CRTs are nearly independent. % Intuitively, CRT 1 utilizes the
% randomness in $\bm Z_1$, and CRT 2 utilizes the randomness in
% $\bm Z_2$ given $\bm Z_{[1]}$.
% This is exactly the structure of sequential CRTs in \Cref{prop:nested} with $\bm V^{[k]} =(\emptyset,\bm Z_1,\dotsc, \bm Z_k).$
%It is easy to see that the conditions in
%\Cref{prop:nested} are satisfied if the $t$-th CRT
%only depends on $\bm Z$ through $\bm Z_t$ given $\bm Z_1,\dotsc,\bm Z_{t-1}$.

The above observation inspires the following construction of nearly
independent CRTs for lag $l \geq 1$. Let us first consider $l=1$. In this
case, we divide the $T$ time steps into two groups:
\begin{equation}\label{equ:c_odd_even}
\mathcal{C}_{\text{odd}} =\{1,3,\dotsc\}\ \text{ and } \ \mathcal{C}_{\text{even}} =
\{2,4,\dotsc\}.
\end{equation}
Denote the set of units that cross over at odd time points as
\[  V^{(0)} =  v^{(0)}(\bm Z)= \bigg\{i \in [N]: \sum_{t\in
  \mathcal{C}_{\text{odd}} } Z_{i,t} = 1  \bigg\}.
\]
Then we divide the treatment variables
$\bm Z$ into two sets over time:
\begin{align*}
&  \bm V_{\text{odd}}  =  (\bm V^{(1)},\bm V^{(3)} ,\dotsc)  =  (\bm Z_1, \bm Z_3,\dotsc) \  \text{ and }\ \bm V_{\text{even}}  =  (\bm V^{(2)}, \bm V^{(4)},\dotsc)  =  (\bm Z_2, \bm Z_4,\dotsc).
\end{align*}
We have $\bm V_{\text{odd}} \independent \bm V_{\text{even}} \mid V^{(0)}$ given the following factorization of the randomization distribution:
\begin{equation}\label{equ:ci_pi}
\begin{split}
\pi(\bm z) & =
\pi(v^{(0)})\pi_{\text{odd}}(\bm v_{\text{odd}} \mid v^{(0)})\pi_{\text{test}}(\bm v_{\text{even}}\mid v^{(0)})  \\
& \equiv \frac{N_{\text{odd}}! N_{\text{even}}!}{N!} \times \frac{\prod_{t\in \mathcal{C}_{\text{odd}} }N_t ! }{ N_{\text{odd}}!} \times  \frac{\prod_{t\in \mathcal{C}_{\text{even}} }N_t ! }{ N_{\text{even}}!},
\end{split}
\end{equation}
where $N_{\text{odd}} = \sum_{t\in \mathcal{C}_{\text{odd}} }N_{t}$ and $N_{\text{even}} = \sum_{t\in \mathcal{C}_{\text{even}} }N_{t}$. Observe that the conditional distributions $\pi_{\text{odd}}(\bm v_{\text{odd}} \mid v^{(0)})$ and $\pi_{\text{test}}(\bm v_{\text{even}}\mid v^{(0)})$ can be further factorized in the same way as \eqref{equ:swd_sequence} and \eqref{equ:swd_sequence_2}. This motivates us to construct two sequences of CRTs, one for the odd time steps and one for the even time steps.
%followed by the factorization
%\begin{align*}
%& \pi_{\text{odd}}(\bm v_{\text{odd}} \mid \bm v_0) =   \prod_{t\in  \mathcal{C}_{ \text{odd}}}\frac{N_{t}! (N_{\text{odd}} - N_{\text{odd},[t]})!}{(N_{\text{odd}} - N_{\text{odd},[t-1]})!}\ \text{ with }  \     N_{\text{odd},[t]} = \sum_{t'\in \mathcal{C}_{\text{odd}}\cap [t] }N_{t'}, \text{ and }                                 \\
%&          \pi_{\text{test}}(\bm v_{\text{test}} \mid \bm v_0) =   \prod_{t\in \mathcal{C}_{\text{even}}}\frac{N_{t}! (N_{\text{even}} - N_{\text{even},[t]})!}{(N_{\text{even}} - N_{\text{even},[t-1]} )!} \  \text{ with }\  N_{\text{even},[t]} = \sum_{t'\in \mathcal{C}_{\text{even}}\cap [t] }N_{t'}.
%\end{align*}
%Each factor can generate a randomization distribution to create a CRT.
Specifically, in the odd sequence, CRT 1 compares cross-overs at time $1$ with cross-overs at time $3,5,\dotsc$; CRT 3 compares cross-overs at
time $3$ and cross-overs at time $5,7,\dotsc$; and so on
(see \Cref{fig:swd-crt-2}).

The above sequence of tests follows the sequential structure in
\Cref{sec:sequential-crts}, where the $k$th CRT conditions on
$(V^{(0)}, \bm V^{(1)},\bm V^{(3)},\dotsc, \bm V^{(k-2)})$ for odd
$k.$ The even sequence can be constructed similarly, and the
$k$th CRT conditions on $(V^{(0)}, \bm V^{(2)},\bm V^{(4)},\dotsc, \bm
V^{(k-2)})$ for even $k.$
%Because $\bm V_{\text{odd}}
%\independent \bm V_{\text{even}} \mid V^{(0)}$, the odd and even tests
%are also independent given $v^{(0)}$.
% Compared with the construction in \Cref{fig:swd-crt-1}, our CRTs in  \Cref{fig:swd-crt-2} have the same crossovers before the comparison times in blue. This is an important feature of our method in stepped wedge designs, where each unit only crosses over only once.
By $\bm V_{\text{odd}}
\independent \bm V_{\text{even}} \mid V^{(0)}$ and \Cref{prop:nested}, the p-values of all these CRTs are nearly independent given $V^{(0)}$. For example, when $T = 5$, our theorem implies that
\begin{align*}
  &\ \mathbb{P}( P^{(1)} \leq \alpha^{(1)}, P^{(2)} \leq \alpha^{(2)},
    P^{(3)} \leq \alpha^{(3)}, P^{(4)} \leq \alpha^{(4)} \mid V^{(0)} ) \\
  = &\ \mathbb{P}( P^{(1)} \leq \alpha^{(1)}, P^{(3)} \leq \alpha^{(3)}
      \mid V^{(0)} ) ~ \mathbb{P}( P^{(2)} \leq \alpha^{(2)}, P^{(4)}
      \leq \alpha^{(4)} \mid V^{(0)} )  \\
  \leq &\ \alpha^{(1)} \alpha^{(3)} \alpha^{(2)} \alpha^{(4)}. \tag*{(By
         \Cref{prop:nested})}
\end{align*}
These p-values can then be combined using Fisher's or Stouffer's
method to test the intersection hypothesis $\cap_t H^{(t)}$ about lag
$l = 1$ effect. They use all the observed outcomes over time, so in this sense, this modified sequential construction uses all the information in the data.

%For every $t$, the $t$-th CRT only depends on $\bm Z$ through $\bm Z_t$ and uses the
%randomization distribution of $\bm Z_t$ given $\bm Z_{[t-1]}$ and
%$g(\bm Z)$. It is easy to verify that the
%odd and even tests are conditionally independent. Thus, these CRTs are
%nearly independent as defined in \eqref{equ:dominance-3}.
%and thus satisfy the condition \eqref{eq:cond-indep-3} in \Cref{thm:general-3}. Similarly,
%\eqref{eq:cond-indep-3} also holds for the even tests. Finally, the
%odd tests and the even tests satisfy \eqref{eq:cond-indep-3} due to
%\eqref{equ:odd_even}. Therefore,  the p-values of the modified CRTs
%satisfy \eqref{equ:dominance-3} and can be combined using the standard
%global testing methods.

%The idea of our method can be summarized as creating sequences of
%sequential CRTs by further factorizing the assignment mechanism
%$\pi$.
The above construction can be easily generalized to $l \geq 1$.
%When $l=1$, we have divided $[T]$
%into two subsets of cross-over times (odd and even).
To test the
lag $l$ effect, we can emulate \eqref{equ:c_odd_even} to increase the gap and divide
$[T]$ into disjoint subsets:
\[
\mathcal{C}_1 =\{1,l+2,2l+3,\dotsc\},\mathcal{C}_2 =
\{2,l+3,2l+4,\dotsc\},\dotsc,\mathcal{C}_{l+1} = \{l+1, 2l + 2, \dots\},
\]
and let $\bm V^{(0)}$ be the random sets $\{i \in [N]: \sum_{t\in \mathcal{C}_{1} } Z_{i,t} = 1 \},\{i \in [N]: \sum_{t\in \mathcal{C}_{2} } Z_{i,t} = 1 \},\dotsc,$ which collects the crossover units in $\mathcal{C}_1,\mathcal{C}_2,\dotsc,$ respectively.
Pseudocode for this algorithm with a general time lag $l$ is given in \Cref{appendix:randomization}, and we will call it multiple conditional randomization tests (MCRTs) in the examples below.

%Using the
%same argument as in the $l=1$ case above, we can verify the two key
%conditions in \Cref{thm:general-3}.

\subsection{Real data examples}

We next demonstrate our method using two real stepped-wedge randomized
trials. Trial I was conducted to examine if a chronic care model was
more effective than usual care in the Netherlands between 2010 and 2012
\citep{muntinga2012implementing}. The study consists of 1,147 frail
older adults in 35 primary care practices.
The primary outcome in the dataset was quality of life measured by a
mental health component score in a 12-item Short Form
questionnaire (SF-12). Each adult's outcome was measured at a
baseline time point and every 6 months over the study. The trial
randomly assigned some practices to start the chronic care model every
6 months. Each practice is an experimental unit, i.e., $N = 35$ and $T = 5$.

Trial II was a stepped-wedge trial performed over the pain clinics of 17
hospitals to estimate the effectiveness of an intervention in reducing
chronic pain for patients \citep[Chapter 6.4]{twiskanalysis2021}. The
outcome was a pain score ranging from 1 (least pain) and 5 (most pain), and six
measurements were taken over time: one at the baseline and five more
equally spaced over a period of twenty weeks. After each measurement,
intervention was started in a few randomly selected
untreated hospitals. We considered each hospital
as an experimental unit and estimated the lag $0,1,2$ and $3$ effects
using hospital-level average outcomes ($N = 17, T = 6$).

In both trials, we implement all our CRTs as two-sample permutation
tests using the difference-in-means statistic. We combine the p-values
from MCRTs using Stouffer's method based on z-scores; this will be
denoted as MCRTs+Z.
%  (see
% \Cref{sec:combine} for more detail)
We compare it with mixed-effects
models, with and without time-fixed effects.
The 90\%-confidence intervals (CIs) for the lagged treatment effects are reported in \Cref{fig:real_data}.
For Trial I in \Cref{fig:real_data}(a), the mixed-effects model without time-fixed effects suggests an improvement in the quality of life with the new chronic care model.  However, as noted by
\citet[Section
6.3.1]{twiskanalysis2021},
this model overlooks the gradual increase in the quality of life over time.
In contrast, both the mixed-effect model with time-fixed
effects and our method MCRTs+Z show that the treatment effect is not
statistically significant, and MCRTs+Z provides slightly wider
confidence intervals. The difference between mixed-effect models with
and without time fixed effects is even more pronounced in Trial II. In
\Cref{fig:real_data}(b), the confidence intervals generated by these
models for the same time lag do not overlap. MCRTs+Z generated
confidence intervals that lie in between, suggesting that the
effectiveness of the intervention may increase over time.
In conclusion, our method (MCRTs) provides a robust and nonparametric
alternative to existing mixed-effect models for inferring lagged
treatment effects. In \Cref{sect:sim_2_mixed}, we will investigate the coverage rates of the confidence intervals obtained by these methods using numerical simulations in cases where the mixed-effect models are misspecified.

% The CIs from mixed-effects models are sensitive to model specification.
% When misspecified, their CI coverage rates can drop
% significantly. In contrast, the CIs of MCRTs consistently
% maintain a good coverage rate, as they are based on randomization
% tests. In \Cref{sect:sim_2_mixed}, we will further investigate this
% point using numerical simulations. {\color{blue} QZ: These should be
%   in the conclusion of the simulation section? You can simply}

\begin{figure}[t]
  \centering
  \includegraphics[width=0.60\linewidth]{./Figures/exp_real}
  \caption{Effect estimates from MCRTs+Z and mixed-effects models with and without time effect parameters: 90\%-confidence intervals (CIs) of lagged effects on real data collected from four different stepped-wedge randomized trials.}
  \label{fig:real_data}
  \centering
\end{figure}

\section{Example 3: Range of spillover effect} \label{sec:spillover}

\subsection{Sequential CRTs for spillover effects}\label{sect:scrt_spill}

We now turn to the problem of testing spillover effects described
in \Cref{label:c_exp}. To be more concrete, we consider the
setting with $N$ units in a spatial domain.
%We want to test if
%the treatment administered to a unit has a spillover effect on
%other (neighboring) units.
%This is a challenging problem because interference among units increases the number of (unobserved) potential outcomes.
Let $\bm z = (z_1,\dots,z_N)$ denote the treatment assignment of all the units and
$Y_i(\bm z)$ denote the potential outcome of unit $i$ under treatment $\bm
z$. The hypothesis of no interference can be written as
\[
  H^{(1)}: Y_j(\bm z) = Y_j(\bm z') ~\text{if}~z_j = z_j'.
\]
\citet{athey2018exact} proposes to test $H^{(1)}$ on the potential
outcomes of a (randomly) selected subset of  ``focal units''
$\mathcal{J}$, while randomizing the treatment assignment of the other
units. By conditioning on the assignment of $\mathcal{J}$, all the
missing potential outcomes of $\mathcal{J}$ are imputable under $H^{(1)}$.
 Other related methods can be found in \citet{basse2019randomization}
 and \citet{puelz2019graph}.

In applications with limited resources for treatment allocation,
it is of interest to know not just whether interference exists but
also the approximate range of the spillover effect. We next propose a
sequential construction of CRTs that test the spillover effect at
increasing distances.

To formulate our hypotheses, suppose we are given a $N \times N$
symmetric matrix $\bm D$ of pairwise distances between the units and a sequence
of distance thresholds $0=\epsilon^{(0)}<\epsilon^{(1)} < \epsilon^{(2)}
< \dots < \epsilon^{(K)}$. Let $\bm B^{(-1)}$ be the $N\times N$ matrix
of zeros, $\bm B^{(0)}$ be the $N\times N$ identity matrix, and $\bm
B^{(k)}$ be the interference matrix with entries given by the
indicator of $\{0 \leq  D_{ij}\leq \epsilon^{(k)} \}$ for
$i,j \in [N]$ and $k \in [K]$. With this definition, the diagonal of $\bm B^{(k)}$ is
one for $k=0,1,\dots, K.$

We would like to test the following null hypothesis:
\begin{equation}\label{equ:h_k_spillover}
     H^{(k)}: Y_j(\bm z) = Y_j(\bm z')% ~\text{for all}~j,\bm z, \bm
     % z'~\text{such that}
     ~\text{if}~B^{(k-1)}_{ij} z_i = B^{(k-1)}_{ij}
     z'_i~\text{for all}~i \in [N],
  %    \bm B_{j}^{(k)}\odot\bm z = \bm
  % B_{j}^{(k)}\odot\bm z'.
\end{equation}
which means that there is no spillover effect between any pair of
units with distance larger than $\epsilon^{(k-1)}$ (using the
convention $\epsilon^{(-1)} = -\infty$).
Thus, $H^{(0)}$ corresponds to the hypothesis of no treatment effect whatsoever and  $H^{(1)}$ is the hypothesis of no interference mentioned above. In the literature, authors also consider the hypothesis of no spillover effect on the control, which adds a condition $z_j = 0$ to the hypothesis  $ H^{(k)}$ in \eqref{equ:h_k_spillover}. This modification does not make a technical difference to the CRTs (including ours). A valid CRT requires the selection of focal units to be independent of the observed assignment
$\bm Z$. To test this new hypothesis, we can only remove the treated units from the test statistics, given that the treatment assignment of the focal units $j$ is fixed in the CRTs.
To simplify the exposition, we will test the original
$ H^{(k)}$ in what follows.

To test different hypotheses $H^{(k)}$, we propose to randomize the
treatment assignment of different subsets of units $\mathcal
I^{(k)}$. Denote the subset
\[
  \mathcal{I}^{[k_1:k_2]} = \bigcup_{l = k_1}^{k_2}
  \mathcal{I}^{(l)}~\text{and}~\bm Z^{[k_1:k_2]} = \bm
  Z_{\mathcal{I}^{[k_1:k_2]}},~0 \leq k_1 \leq k_2 \leq K.
\]
Our first method can be viewed as a sequential extension of the method
in \citet{athey2018exact}. Let $\mathcal{J}$ be a chosen set of
``focal units''. Given $\mathcal{J}$, as $k$ increases, we iteratively define
\begin{equation}
  \label{eq:Ik}
  \mathcal{I}^{{(k)}} = \bigg\{i \in [N] \setminus \mathcal{I}^{[0:(k-1)]}:
  \sum_{j \in \mathcal{J}} B^{(k)}_{ij} \geq 1
  \bigg\},~0 \leq k \leq K.
\end{equation}
In words, as $k$ increases, we let $\mathcal{I}^{(k)}$ collect all the units that fall in
the $\epsilon^{(k)}$-balls centered at the focal units $\mathcal J$. But we have to remove $ \mathcal{I}^{[0:(k-1)]}$ because $H^{(k)}$ is about the spillover effect at a distance larger than $\epsilon^{(k)}.$ \Cref{fig:illustrate} illustrates
this unit selection procedure based on a static set of focal units.
The construction in \eqref{eq:Ik} implies that
\begin{equation}
  \label{eq:select-I-J}
  \sum_{i\in \mathcal{I}^{[k:K]}} B_{ij}^{(k-1)} =0~\text{for all}~j
  \in \mathcal{J},0 \leq k \leq K.
\end{equation}
Consequently, the potential outcomes of the focal units are
imputable under $H^{(k)}$ given $\bm z^{[0:(k-1)]}$:
%That is, if $H^{(k)}$ is true,
%we have
\begin{equation}
  \label{eq:spillover-impute}
  Y_j(\bm z) = Y_j\quad\text{for all}~j \in
  \mathcal{J} \text{ and } \bm z~\text{such that}~\bm z^{[0:(k-1)]} = \bm
  Z^{[0:(k-1)]}.
\end{equation}

In principle, we can test $H^{(k)}$ using any test statistic
that only depends on the potential outcomes of $\mathcal{J}$. A
concrete example that will be used below is the
difference-in-means statistic
\begin{equation}\label{equ:statistics}
t^{(k)}  (\bm z, \bm W)    =  \frac{\sum_{j\in \mathcal{J}
                               }  A_j^{(k)}(\bm z^{(k)}) Y_j(\bm z)}{
                               \sum_{j \in \mathcal{J}   }
                               A_{j}^{(k)}(\bm z^{(k)})    }   -
                               \frac{\sum_{j\in \mathcal{J} }
                               \big\{1-  A_{j}^{(k)}(\bm z^{(k)})
                               \big\}Y_{j}(\bm z)  }{  \sum_{j \in
                               \mathcal{J}  }    \big\{1-
                               A_{j}^{ (k)}(\bm z^{(k)}) \big\}  },
\end{equation}
where $A_j^{(k)}(\bm z^{(k)}) $ is an indicator for $\sum_{i
  \in \mathcal{I}^{(k)}} z_i B_{ij}^{(k)} > 0$,
i.e., it indicates whether unit $j \in \mathcal{J}$ receives a
spillover effect from $\mathcal{I}^{(k)}$ if $H^{(k)}$ is not
true. When $k = 0$ so $\bm B^{(0)}$ is the identity matrix, this
reduces to $A_j^{(k)}(\bm z) = z_j$ for all $j \in \mathcal{J}^{(0)} =
\mathcal{I}^{(0)}$. Under $H^{(k)}$, equation
\eqref{eq:spillover-impute} shows that $T^{(k)}(\bm z, \bm W)$ only
depends on $\bm z$ through $\bm z^{[0:k]}$. Thus, by letting $\bm
V^{(k)} = \bm Z^{(k)}$ for $k = 0, 1,\dots, K$, we obtain a sequence of
nearly independent p-values as defined in \eqref{eq:sequential-crt}.

\begin{figure}[t]
  \centering
  \includegraphics[width=0.85\linewidth]{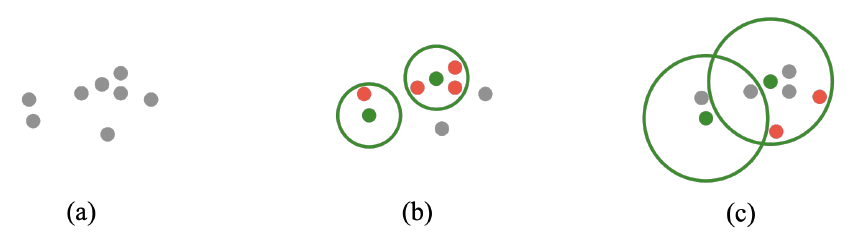}
  \caption{An illustration of the static unit selection procedure in \eqref{eq:Ik}. Panel (a) shows the units' locations. In panel (b), two green focal units $j\in \mathcal{J}$ has some red neighbours in its $\epsilon^{(1)}$-ball, which will be included in $\mathcal I^{(1)}$. In panel (c), we fix the treatment assignment of $\mathcal I^{(1)}$ (i.e. exclude them from $\mathcal I^{(2)}$) when testing the spillover effect at a larger distance $\epsilon^{(2)}$; the green focal unit on the left has no red neighbour as the distance increases.
      }
  % \vspace{-5pt}
  \label{fig:illustrate}
  \centering
\end{figure}

\subsection{Adaptive selection of focal units}
\label{sec:adapt-select-focal}

A limitation of the above construction is that the set
$\mathcal{I}^{(k)}$ is often quite small for large distances
$\epsilon^{(k)}$; this will be exemplified using a real dataset in
\Cref{sec:real-data-example}. Heuristically, it is difficult to select
a single subset of focal units that lead to CRTs with decent power for
all the spillover distances. If the subset $\mathcal J$ is too dense (sparse)
in the spatial domain, the above tests have little power for spillover
effect at longer (shorter) distances.

Fortunately, the construction in \Cref{sect:scrt_spill} can be easily
modified to allow adaptive selection of focal units. The key idea is
to partition $[N]$ into subsets $\mathcal{I}^{(0)},\dots,\mathcal{I}^{(K)}$ first
and then use the largest possible set $\mathcal{J}^{(k)}$ of focal
units for every distance threshold $\epsilon^{(k)}$.
In the following, we first treat $\mathcal{I}^{(0)},\dots,\mathcal{I}^{(K)}$ as given and
discuss our choice of $\mathcal{J}^{(k)}$.

The CRT for $H^{(k)}$ uses the potential outcomes of the following set of ``focal units'':
\begin{equation}
  \label{eq:focal-units}
 \mathcal{J}^{(k)} = \left\{j \in [N]:  \sum_{i\in \mathcal{I}^{(k)}} B_{ij}^{(k)}\geq 1
\ \text{ and }  \sum_{i\in
     \mathcal{I}^{[k:K]}} B_{ij}^{(k-1)} =0
\right\}.
%When $k \geq 1$, we randomize $\bm Z^{[k:K]}$ and test the spillover effect on
%the outcome of a disjoint set of units
%$\mathcal{J}^{(k)}$ satisfying that $\mathcal{I}^{(k)} \cap \mathcal{J}^{(k)} = \emptyset.$
%as $\mathcal{I}^{(k)} \cap \mathcal{J}^{(k)} =
%\emptyset$ for $k \geq 2$.
% \footnote{If the size of $\mathcal{J}^{(k)}$ for some small $k$ (e.g. $k=1$) is too small, we can first choose $\mathcal{J}^{(1)}$ to be $3$-neighbours of  $\mathcal{I}^{(1)}$. Then we remove $\mathcal{J}^{(1)}$ in the definition of the subsequent $\mathcal{I}^{(k)}$, e.g., finding the $6$-cover of the units
% $[N]\setminus (\mathcal{I}^{(1)}\cup \mathcal{J}^{(1)})$.}
%  [N]\setminus
% \mathcal{I}^{[k:K]}
 % We will not consider testing the
% fully sharp null $H^{(0)}$ below, but an independent test can be
% obtained by applying a standard unconditional randomization test to
% the units in $\mathcal{I}^{(0)}$.
\end{equation}
The first condition in
\eqref{eq:focal-units} ensures that for every focal unit $j\in
\mathcal{J}^{(k)}$, there exists at least one unit $i\in
\mathcal{I}^{(k)}$ with distance $D_{ij}\in
(\epsilon^{(k-1)},\epsilon^{(k)}].$ In other words, the focal units are ``on the edge'' of experiencing a spillover
effect from $\mathcal{I}^{(k)}$ when we randomize $\bm Z^{[k:K]}$ and
$H^{(k)}$ is not true.

%is simply equation
%\eqref{eq:select-I-J} treated as a constraint on $j$ instead of
%$i$.

Like \eqref{eq:select-I-J},
the second condition in \eqref{eq:focal-units} implies that the distances between the focal units and $\mathcal{I}^{[k:K]}$
are larger than $\epsilon^{(k-1)}$.
The definition in  \eqref{eq:focal-units} implies that $\mathcal{J}^{(0)} = \mathcal{I}^{(0)}$ and $\mathcal{J}^{(k)} \subseteq
\mathcal{I}^{[0:(k-1)]} $ for $k \geq 1$.
Equation \eqref{eq:spillover-impute} holds with $\mathcal{J}$ replaced by $\mathcal{J}^{(k)}$.
Then we can use the same test statistic in \eqref{equ:statistics} but with $\mathcal{J}$ replaced by $\mathcal{J}^{(k)}$.
Then we have a sequence of nearly independent p-values
in \eqref{eq:sequential-crt} by choosing $\bm
V^{(k)} = \bm Z^{(k)}$ for $k = 0, 1,\dots,K$ as above.

%Because the statistics is a measurable function of  $\bm z^{[k]}$ and $\bm W$ because

%Thus, \eqref{eq:spillover-impute} holds with $\mathcal{J}$
%replaced by $\mathcal{J}^{(k)}$.

% In words, the first term in $T^{(k)}$ is the
% average potential outcome of the units that can experience a spillover
% effect when the spillover distance increases from $\epsilon^{(k-1)}$ to $\epsilon^{(k)}$.
% Note that when $H^{(k)}$ is true, the potential outcome $Y_j(\bm z)$
% only depends on $\bm z_{\mathcal{I}^{[k]}}$ (i.e. $\bm v^{(k)}$), so this test statistic is
% a computable function of $\bm V^{[k]}$ and $\bm
% Y_{\mathcal{J}^{(k)}}$.

% \subsection{Selecting subsets of units}
% \label{sec:partitioning-units}

Next, we describe a procedure for selecting the subsets
$\mathcal{I}^{(k)}$, with the aim of generating large
$\mathcal{I}^{(k)}$ and $\mathcal{J}^{(k)}$ to increase the
statistical power. We say $\mathcal{U}$ is an $\epsilon$-cover of a
set of units $\mathcal{M} \subseteq [N]$, if the $\epsilon$-balls of
the units in $\mathcal{U}$ cover all units in $\mathcal{M}$. Finding
the minimum cover is an NP-hard problem, but an approximate solution
can be obtained by solving its linear program relaxation.
Let $\mathcal{I}^{[1:0]}=\emptyset.$
As $k$ increases from 1 to $K,$ our algorithm iteratively finds the minimum $\epsilon^{(k)}$-cover $\mathcal{U}^{(k)}$ of the units $[N] \setminus \mathcal{I}^{[1:(k-1)]}$ and selects its subset
\begin{equation}\label{equ:i_k}
\mathcal{I}^{(k)} = \bigg\{i \in \mathcal{U}^{(k)} : \sum_{j \neq
  i}B_{ij}^{(k)}\geq  1 \bigg\},
\end{equation}
which are essentially the units whose $\epsilon^{(k)}$-balls contain not only themselves. Finally, we let $\mathcal{I}^{(0)} = [N]\setminus \mathcal I^{[1:K]}$.  Our algorithm ensures that  $\mathcal{I}^{(0)},\mathcal{I}^{(1)},\dots,\mathcal{I}^{(K)}$ are disjoint and form a partition of $[N]$, which is required in our construction of sequential CRTs.

Through the definition in \eqref{equ:i_k}, all the units in $\mathcal{I}^{[1:k-1]}$ have at least one neighbours left in
$[N] \setminus \mathcal{I}^{[1:(k-1)]}$.
Heuristically, solving the covering problem with and without  $\mathcal{I}^{[1:k-1]}$ will lead to a similar cover  $\mathcal{U}^{(k)}$, given that the $\epsilon^{(k)}$-balls get larger as $k$ increases. In other words, every $\mathcal{U}^{(k)}$ remains as an approximate cover of all the units in $[N].$ Consequentially, there are always focal units $\mathcal{J}^{(k)}$ to use in the neighbourhoods of
$\mathcal{I}^{(k)}\subseteq \mathcal{U}^{(k)}.$
Below, we compare this adaptive
approach of selecting focal units with the static approach in
\Cref{sect:scrt_spill} on a real-world dataset.

\subsection{A real data example}
\label{sec:real-data-example}

We illustrate our method with a real data example
\citep{jayachandran2017cash}. The dataset comprises 121 villages
located in Hoima district and northern Kibaale district in Uganda,
with 60 of these villages randomly assigned to receive annual payments
for forest preservation from 2011 to 2013. The
cash payments should reduce deforestation in the treated units, but one
concern is that they may increase deforestation in nearby,
untreated villages. Following \citet{wang2020design}, we
use change in the land area covered by trees as the outcome, which is
measured from satellite images \citep{hansen2013high}. We set $\epsilon^{(k)} = 3k$
and $K = 4$. The minimum set covering
problem is solved approximately using \texttt{setcover}
in the R-package \texttt{adagio} \citep{borchers2016adagio}.

\begin{figure}[t]
  \centering
    \includegraphics[width = 0.98\textwidth]{./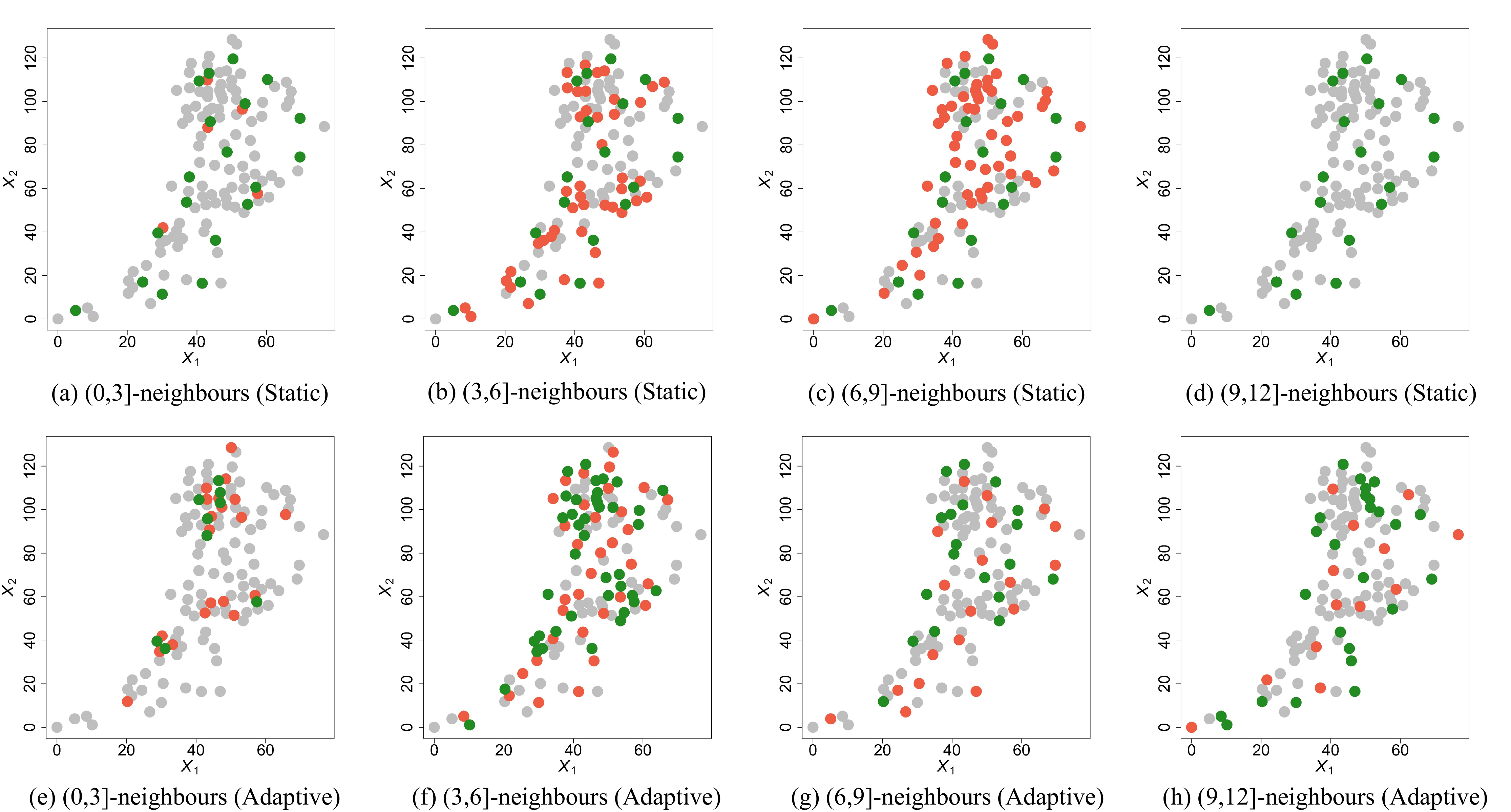}
    \caption{ The subsets of units $\mathcal I^{(k)}$ (red) and
      $\mathcal J^{(k)}$ (green) created by the baseline method
      (fixing the same subset of focal units in all tests) and our
      sequential covering procedure.
    %In the upper panels, we keep $\mathcal{I}^{[k]}$'s disjoint by removing $\mathcal{I}^{[k-1]}$
%sequentially in the procedure. In the lower panels, we allow
%$\mathcal{I}^{[k]}$'s overlap as a comparison.
    }
  \label{fig:interference_0}
\end{figure}

To test the spillover effects within a distance of 12,
the static approach fixes the focal units in all the CRTs to the minimum 9-cover $\mathcal{U}$ of the units in $[N]$, which are those green points in each upper panel of \Cref{fig:interference_0}. We then select $\mathcal I^{(1)}$ to be the 3-neighbours of $\mathcal{U}$, which are the red dots in panel (a).
Recall that $H^{(k)}$ is about no spillover effect at a distance larger than $3(k-1).$ In its CRT, we take $\mathcal  I^{(k)}$ as the $3k$-neighbours of $\mathcal{U}$ but with $\mathcal I^{[k-1]}$ removed.  As shown in panels (a) and (d), this method leads to a small $\mathcal I^{(1)}$ and empty $\mathcal I^{(4)}$, respectively.  In comparison, the lower panels of \Cref{fig:interference_0} show that our adaptive method generates large enough $\mathcal{I}^{(k)}$ and  $\mathcal{J}^{(k)}$ while keeping the CRTs nearly independent. The distances between red and green units increase in the panels from left to right, indicating that we are testing the spillover effect at greater distances.

\begin{table}[t]
\caption{P-values  for testing spillover
  effects at different distances in the forest example.}
\label{tab:p_spillover}
\centering
\resizebox{0.56\columnwidth}{!}{%
\begin{tabular}{ |c|c|c|c|c | }
\hline
 \diagbox{Method}{Distance}  & 3 & 6 & 9  & 12
                                               \\
\hline
%Baseline  & .245 $\pm$ .014 & .528 $\pm$ .015 & 1.000 $\pm$ .000 & 1.000 $\pm$ .000 \\
% MCRTs    & .025 $\pm$ .005 & .207 $\pm$ .014 & .344 $\pm$ .013 & .549 $\pm$ .015 \\
% MCRTs+F  & .086 $\pm$ .012 & .367 $\pm$ .017 & .502 $\pm$ .016 & .549 $\pm$ .015 \\
Static  & 0.564  &  0.056  & 0.404 & 1.000 \\
 MCRTs    & 0.013  & 0.120  & 0.503  & 0.713 \\
 MCRTs+F  & 0.058  & 0.392  & 0.726  & 0.713 \\
%MCRTs (Overlap)  & .040 $\pm$ .007 & .389 $\pm$ .014 & .793 $\pm$ .012 & .918 $\pm$ .009 \\
\hline
\end{tabular}
}
\end{table}

\Cref{tab:p_spillover} shows the p-values produced by the static method, our method
MCRTs and its variant MCRTs+F. MCRTs+F tests $H^{(k)}$ by combining the p-values $P^{(k)},\dotsc, P^{(4)}$ from MCRTs using Fisher's method. This strategy assumes that the spillover effect is decreasing as the spillover distance increases.
MCRTs and MCRTs+F agree that the payments can encourage more forest conservation within a distance of 3, and there is no displacement of deforestation. The combined p-values in MCRTs+F are larger than the ones in MCRTs. This is because the statistical power for testing spillover effects is complicated. It depends not only on the spillover distance but also on other factors, such as units' location and the design.
Nevertheless, the simulations in \Cref{sect:sim_interference} show that combining p-values can be useful when the CRT is not well powered at some distance.

\section{Numerical experiments}
\label{sec:experiment}

\subsection{Stepped-wedge design: size and power}\label{sim:swd}

We examine the performance of the MCRTs method in \Cref{sec:swd} by
comparing it with Bonferroni's Method and checking its validity in the
presence of unit-by-time interactions. We use the notations MCRTs+F
and MCRTs+Z to represent the combination of MCRTs with Fisher's
combination method \citep{fisher1925statistical} and Stouffer's method
using the Z-scores, respectively; more details can be found in \Cref{sec:combine}. In all the CRTs, we use the
difference-in-means statistic and 1000 permutations.

As shown in \Cref{fig:swd-crt}, the non-nested CRTs have larger control groups compared to our nested CRTs created in MCRTs. However, the non-nested CRTs are not independent, and there are limited ways to combine them. The most commonly used method in this case is Bonferroni's method, which rejects the intersection of $K$ hypotheses if the smallest p-value is less than $\alpha/K$.
A remaining question is whether the reduced sample size in MCRTs
can indeed be compensated by combining p-values.
To answer this question empirically, we conducted a simulation with variations in
the sample size $N$, trial length $T$, time lag $l$, and
effect sizes $\tau_l$, respectively.

\begin{figure}[t]
  \centering
  \includegraphics[width=0.90\linewidth]{./Figures/exp_power}
  \caption{Performance of MCRTs+F, MCRTs+Z and Bonferroni's method: type I error rates and powers in testing lagged effects at five different numbers of units, numbers of time steps, time lags and effect sizes. The results were averaged over 1000 independent runs.}
  \vspace{-8pt}
  \label{fig:error_power}
  \centering
\end{figure}

We fix $N_1=
\dotsc = N_T = N/T$ and $N_T=N-\sum_{t=1}^{T-1}N_t$ in the stepped-wedge design.
Let $A_i$ denote the treatment starting
time of unit $i$, i.e., the index of the non-zero entry of $Z_i$.
The outcomes were generated by a linear mixed-effects model,
\[
Y_{it} = \mu_i + 0.5(X_i+t)   + \sum_{l=0}^{T-1}1_{\{A_i-t =l
  \}}\tau_l + \epsilon_{it},~i=1,\dotsc,N,~t=0,\dotsc,T,
\]
where $\mu_i\sim \mathcal N(0,0.25)$, $X_i\sim \mathcal N(0,0.25)$ and
$\epsilon_{it}\sim \mathcal N(0,0.1)$. This assumes that the baseline outcome
$Y_{i0}$ is measured, which is not uncommon in real clinical
trials. Basic parameters were varied in the following ranges: (i)
number of units $N\in \{100,200,300,400,500\}$; (ii) number of time
steps $T\in \{4,6,8,10,12\}$; (iii) time lag $l\in \{0,1,2,3,4\}$;
effect size $\tau_l\in \{0.01,0.02,0.03,0.04,0.05\}$. The
performance of the methods was examined when one of $N$, $T$, and $l$
is changed while the other two are fixed at the medians of their
ranges, respectively. For example, we
increased $N$ from 100 to 500 while keeping $T=8$ and $l=2$. In these
simulations, $\tau_l$ was set to $0$ and $0.03$ to investigate the level and power of the methods, respectively.
 One more simulation was created to study the power as the effect size
 $\tau_l$ varies, in which we keep the first three parameters at their
 medians $(N=300,T=8, l=2)$ and increase $\tau_l$ from 0.01 to
 0.05.

The upper panels of \Cref{fig:error_power} show that all the methods
control the type I errors at any number of units, time steps and time
lags. The lower panel shows that our methods are more
powerful than Bonferroni's method in all the simulations. MCRTs+Z is
slightly more powerful than MCRTs+F in all experiments. This shows that
the weighted z-score is more effective than Fisher's combination
method for MCRTs. Panels (d) and (g) show that our methods
outperform Bonferroni's method by a wider margin as the sample size or the effect
size increases. Panel (e) establishes the same observation for trial
length, which can be explained by the fixed sample size (so fewer
units start treatment at each step as trial length increases). This is
not an ideal scenario for applying Bonferroni's method, as the
individual tests have diminishing power. Finally,
panel (f) shows that all the methods have smaller power as the lag size increases, and our methods are particularly powerful when the time lag is small. This is because there are fewer permutation tests available for larger time lags. Moreover, by only including the units that are treated after a large time lag, the control groups in the permutation tests are small.
Overall, the reduced sample size in MCRTs may not be as damaging
as it might first appear because every outcome relevant to
lag $l$ effect is still used in at least one test in
MCRTs.

%these simulations support the conclusion that the
%sample-splitting step in MCRTs is a worthy sacrifice as a more powerful
%p-value combiner can be applied. So

\subsection{Stepped-wedge design: misspecified mixed-effects models }\label{sect:sim_2_mixed}

% Simulation I demonstrates that our methods MCRTs and MCRT+ are
% powerful in testing partially sharp null hypotheses of lagged effects
% by combining multiple permutation tests.
We next investigate the finite-sample properties
of confidence intervals obtained by inverting conditional
randomization tests (see \Cref{appenx:invert}
for more details). We compare its efficiency and
robustness with mixed-effects models for estimating lagged treatment effects.
%\citep{kenny2021analysis}.

\begin{figure}[t]
  \centering
  \includegraphics[width=0.95\linewidth]{./Figures/exp_interact}
  \caption{Performance of MCRTs+F, MCRTs+Z and the mixed-effects model: coverage rates and lengths of confidence intervals (CIs) under the outcome generating processes with no interaction effect and with three different types of covariate-and-time interactions. } %. The results are averaged over 1000 independent runs.
  \label{fig:interaction}
  \centering
\end{figure}

The treatment generating process was kept
the same as above. Simulation parameters are set as $N=200$,
$T=8$ and lagged effects $(\tau_0,\dotsc,\tau_7) =
(0.1,0.3,0.6,0.4,0.2,0,0,0)$. The treatment effect is gradually realized and then decays to 0 over time.
%\qz{I think the means that there is an
%  instantaneous ``shock'' effect that gradually dies out. The arguably
%  more realistic setting is $\tau$ increases over time (so treatment
%  effect is gradually realized).
%Could you run some simulations for
%  that as well?}
 For every $i\in [N]$ and $t=0,1,\dotsc,T$, we generate the outcome $Y_{it}$ using a mixed-effects model,
\[
Y_{it} = \mu_i + 0.5(1-0.1\cdot1_{\{m\neq 0\}})(X_i+t) + 0.1 f_m(X_i+t) + \sum_{l=0}^{7}1_{\{A_i-t =l \}}\tau_l+\epsilon_{it},
\]
where $\mu_i\sim \mathcal N(0,0.25)$, $X_i\sim \mathcal N(0,0.25)$,
$\epsilon_{it}\sim \mathcal N(0,0.1)$ and the unit-by-time interaction $f_m(X_i+t)=0$ if $m = 0$ (no interaction),
$(X_i+t)^2$ if $m = 1$ (quadratic interaction), $2\exp[
(X_i+t)/2]$ if $m = 2$ (exponential interaction), and $5\tanh(X_i+t)$
if $m$=3 (hyperbolic tangent interaction).
% \[
% f_m(X_i+t) =
% \begin{cases}
%         0, & \quad \text{if } m = 0,\\
%         (X_i+t)^2, & \quad \text{if } m=1,\\
%         2\exp[ (X_i+t)/2], & \quad \text{if } m=2,\\
% 5\tanh(X_i+t), & \quad \text{if } m=3,
% \end{cases}
% \]
% which correspond, respectively, to no, quadratic, exponential and
% hyperbolic tangent interactions.
We tasked our methods (MCRTs+F and MCRTs+Z) and a mixed-effects model
described below to construct valid 90\%-confidence intervals (CIs) for the lagged effects $\tau_0,\dotsc
\tau_4.$
%Our methods were implemented in exactly the same way as in
%Simulation I except that the permutation tests were inverted to obtain
%interval estimators.
The mixed-effects model takes the form
\begin{equation}\label{equ:mixed_model}
Y_{it} = \beta_{0i} + \beta_1 x_i + \beta_2 t +
\sum_{l=0}^{7}\xi_{l}1_{\{A_i-t =l \}}+ \epsilon_{it},
\end{equation}
where $\beta_{0i}$ is a random unit effect, $\beta_1, \beta_2$ are fixed
effects and $\xi_{0},\dotsc,\xi_{7}$ are lagged effects, and were
fitted using the R-package \texttt{lme4}
\citep{Douglas2015bates}. Recently, \citet{kenny2021analysis} proposed mixed-effects models that can leverage the shapes of time-varying treatment effects. Their models are given by specifying some parametric effect curves with the help of basis functions (e.g. cubic spline).
Besides the unit-by-time interaction, the model \eqref{equ:mixed_model} is already correctly specified for modelling the treatment effects and other parts of the outcome generating process above.
Excluding some of the lagged effect parameters $\xi_5,\dotsc,\xi_7$
from the model may change the effect estimates but not the validity of its CIs.

As the unit-by-time interactions are unknown
and fully specifying them would render the model unidentifiable, they
are typically not considered in mixed-effect models. If the
stepped-wedge trial is randomized at the cluster levels, one can
introduce cluster-by-time interaction effects in a mixed-effects
model; see \citet[Equation 3.1]{ji2017randomization} for an
example. However, the cluster-by-time interactions are not exactly the
same as the interaction between time and some covariates of the
units. It depends on if the interaction varies within each cluster and the clusters are defined by the covariates which interact with time.

The upper panels in \Cref{fig:interaction} show that the CIs of the
mixed-effects model only achieve the target coverage rate of $90\%$
for all the lagged effects when there is no unit-by-time interaction
(panel a). In contrast, the CIs
of our methods fulfil the target coverage rate of $90\%$ in all
scenarios. The lower panels show that the valid CIs given by our
methods have reasonable
lengths between 0.05 and 0.10 for covering the true lagged effects
$(\tau_0,\dotsc,\tau_4) = (0.1,0.3,0.6,0.4,0.2)$. Finally, panel
(e) shows that when the mixed-effects model is indeed specified correctly,
it gives valid CIs that are narrower than our nonparametric tests.

\subsection{Simulation II: general interference}\label{sect:sim_interference}

We next assess the validity and power of the static method and the MCRTs(+F) method from \Cref{sec:spillover}.
Our simulation has three parameters: sample size $N\in \{100,150,200,250,300\}$, proportion of treated units $P_N \in \{0.1,0.2,0.3,0.4,0.5\}$ and signal strength $\beta \in \{0,0.5,1.0,1.5,2.0\}$. For $i\in [N],$ unit $i$'s location $\bm X_i = (X_{i1},X_{i2})$ is drawn independently from a mixture of bivariate normal distributions,
\begin{align*}
& X_{i1}\sim 0.5\mathcal{N}(50,100) + 0.3\mathcal{N}(30,56.25) + 0.2 \mathcal{N}(40,56.25),\\
& X_{i2}\sim 0.5\mathcal{N}(50,100) + 0.3\mathcal{N}(60,56.25) + 0.2 \mathcal{N}(20,56.25).
\end{align*}
Let $D_{ij}$ denote the Euclidean distance between units $i$ and $j$. The outcome of each unit $i$ is given by
\[
Y_i = 4Z_i +  \sum_{j\in [N]\setminus \{i\}}\sum_{k\in [5]} 1_{\{  D_{ij} \in  ( k-1,k] \}} Z_j \beta \tau_k + E_i,
\]
where  $E_i\sim \mathcal N(0,1)$ and $(\tau_1,\tau_2,\tau_3,\tau_4,\tau_5) = (2.0,1.6,1.2,0.8,0.4)$ is a decreasing spillover effect vector. As we vary one of the parameters  $N,P_N$ and $\beta$, we fix the other two at the median of their respective ranges.   In all the CRTs, we use the difference-in-means statistic and fix the number of permutations at 1000.

\begin{figure}[ht]
\vspace{-5pt}
% \vspace{4mm}
  \centering
    \includegraphics[width = 0.8\textwidth]{./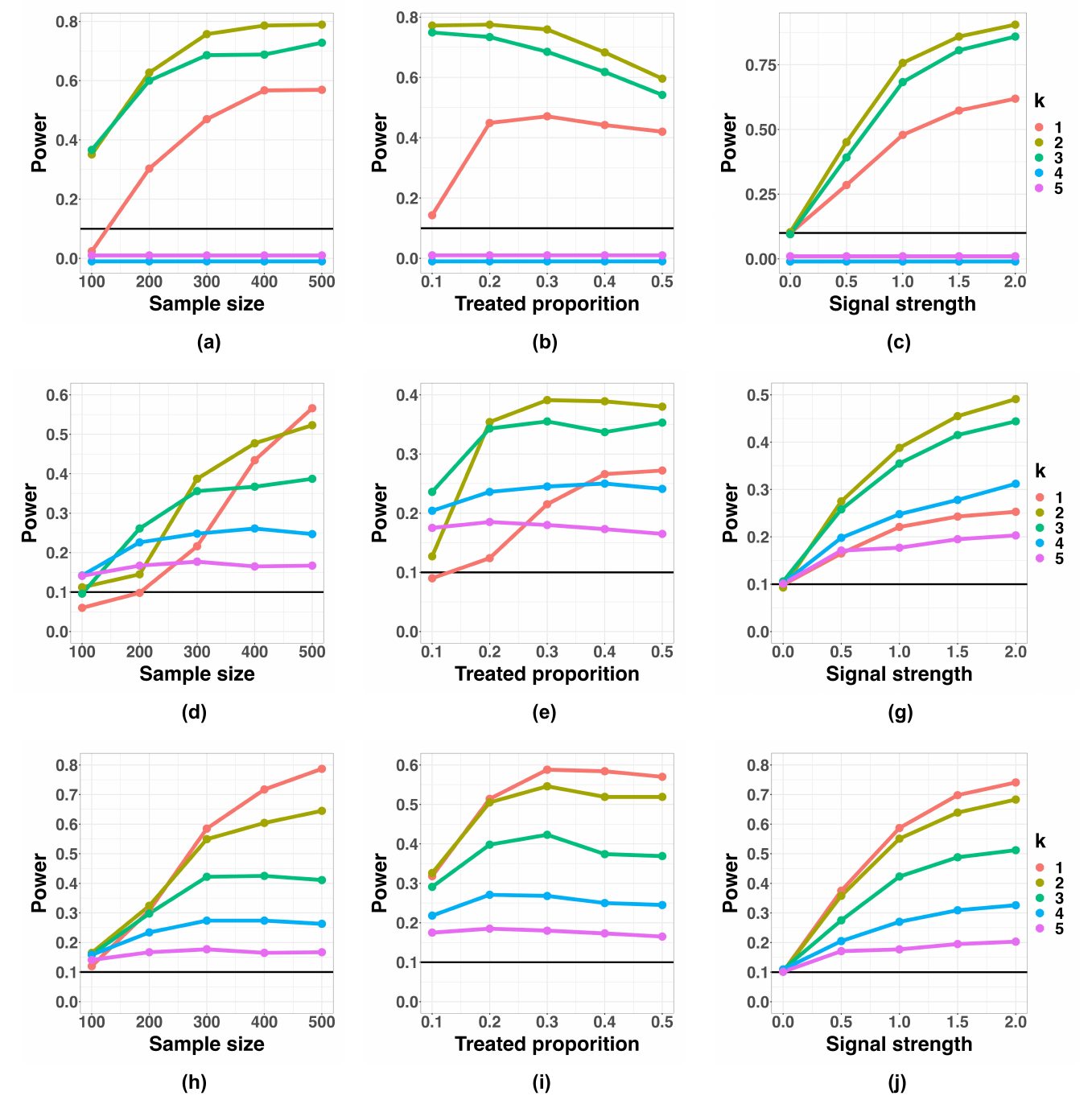}
    \caption{Performance of the static method and the MCRTs(+F) method in testing the hypotheses $H^{(k)}$ for $k\in [5]$. The upper panels use the p-values from the static method.
  The middle panels use the p-values from MCRTs. The lower panels combine the p-values from
  MCRTs using Fisher's method, denoted by MCRTs+F. The results are averaged over 1000 independent runs.}
  \label{fig:interference_plot}
  \vspace{2pt}
\end{figure}
The results of all the methods are reported in \Cref{fig:interference_plot}.
In the static method, we let the focal units be the minimum 3-cover of all the units. The top panels show that the static method is powerful in testing the hypothesis of no spillover effect between units with distance more than 3, but has no power for longer distances because no units can be found for the randomization test (similar to \Cref{fig:interference_0}(d) in the real data example). The middle panels show that the power of our adaptive MCRTs is not a simple decreasing function of the distance $k$. The bottom panels show that this can be addressed by combining the tests at different distances (for example, we use Fisher's method that combines $P^{(k)},\dotsc, P^{(5)}$ to test $H^{(k)}$) because the hypotheses are nested and the p-values are nearly independent. Panels (c,g,f) verify that the CRTs control the type I error rate at the significance level $\alpha=0.1$.

%We create the subsets of units $\mathcal I^{(k)}$ and %$\mathcal J^{(k)}$ using the covering procedure developed in %\Cref{sect:spillover_range}.
%interference matrix $\bm B^{(k)} = \big(B_{ij}^{(k)} = 1_{\{ \|\bm X_i - \bm X_j\|_2 \leq k \}}: i,j\in [N]\big)$ for $k\in [5]$.

%

\clearpage

\section{Multiple conditional randomization tests}
\label{sec:multiple-crt}

We next generalize the theory of sequential CRTs in
\Cref{sec:sequential-crts}. For any subset of the CRTs $\mathcal{J}
\subseteq [K]$, we define
the \emph{union}, \emph{refinement} and \emph{coarsening} of their
conditioning sets  as
\[
  \mathcal{R}^{\mathcal{J}} = \bigcup_{k \in \mathcal{J}}
  \mathcal{R}^{(k)}, \quad
  \underline{\mathcal{R}}^{\mathcal{J}} = \left\{ \bigcap_{j \in \mathcal{J}}
    \mathcal{S}_{\bm z}^{(j)} : \bm z \in
    \mathcal{Z}\right\}, \quad \text{and} \quad
  \overline{\mathcal{R}}^{\mathcal{J}} = \left\{\bigcup_{j \in \mathcal{J}}
    \mathcal{S}^{(j)}_{\bm z}: \bm z \in
    \mathcal{Z}\right\}.
\]
Let $\mathcal{G}^{\mathcal{J}}$ be the $\sigma$-algebra generated
by the sets in $\mathcal{R}^{\mathcal{J}}$, so
$\mathcal{G}^{\mathcal{J}} = \sigma\big(\mathcal{R}^{\mathcal{J}} \big).$
Similarly, the refinement and
coarsening $\sigma$-algebras are defined as
$\underline{\mathcal{G}}^{\mathcal{J}} =
\sigma\big(\underline{\mathcal{R}}^{\mathcal{J}} \big)$ and
$\overline{\mathcal{G}}^{\mathcal{J}} =
\sigma\big(\overline{\mathcal{R}}^{\mathcal{J}} \big)$.

\begin{theorem}\label{thm:general-3}
  Suppose the following two conditions are satisfied for all $j, k\in [K]$, $j \neq k$.
  \begin{equation}
    \label{eq:nested}
    \underline{\mathcal{R}}^{\{j,k\}}\subseteq \mathcal{R}^{\{j,k\}},
  \end{equation}
  \begin{equation} \label{eq:cond-indep-3}
    \left( t^{(j)}(\bm Z, \bm W)\right)_{j \in \mathcal{J}} \text{ are independent given }
    \underline{\mathcal{G}}^{\mathcal{J}} , \bm W \text{ for
      all }
    \mathcal{J} \subseteq [K],
  \end{equation}
  where conditioning  on $\underline{\mathcal{G}}^{\mathcal{J}}$ means
  conditioning on $\bm Z\in \mathcal{S}$ for any set $\mathcal{S}\in
  \underline{\mathcal{R}}^{\mathcal{J}}$.
  Under Assumption  \ref{assump:randomization}, for any $ \alpha^{(1)},\dotsc,\alpha^{(K)} \in [0,1]$,
  \begin{equation}\label{equ:dominance-conditional-3}
    \begin{split}
      \mathbb{P}\left\{P^{(1)}(\bm Z, \bm W) \leq \alpha^{(1)},\dotsc, P^{(K)}(\bm
      Z, \bm W) \leq \alpha^{(K)} \mid \overline{\mathcal{G}}^{[K]},
      \bm W\right\}
      \leq \prod_{k=1}^{K}\alpha^{(k)}.
    \end{split}
  \end{equation}
  In consequence, \eqref{equ:dominance-3} holds for any $ \alpha^{(1)},\dotsc,\alpha^{(K)} \in [0,1]$.
\end{theorem}

The proof of \Cref{thm:general-3} is not straightforward and can be found in \Cref{sec:app-proof}.
Comparing to \Cref{prop:nested}, it is allowed that  $\mathcal{S}^{(1)}_{\bm z}
\subseteq \mathcal{S}^{(2)}_{\bm z}$ for some $\bm z$ but
$\mathcal{S}^{(1)}_{\bm z^{*}} \supseteq \mathcal{S}^{(2)}_{\bm
  z^{*}}$ for another $\bm z^{*} \neq \bm z$.
% To check if the conditions in \Cref{thm:general-3} are satisfied, we shall consider $K=2$ or $\mathcal{J} = [2].$
% In this case, \eqref{eq:nested} is equivalent to  assuming
% \begin{equation}\label{equ:ss}
%   \mathcal{S}^{(1)}_{\bm z} \cap \mathcal{S}^{(2)}_{\bm z} = \mathcal{S}^{(1)}_{\bm z} \text{ or } \mathcal{S}^{(2)}_{\bm z},\quad  \forall \bm z \in \mathcal{Z}.
% \end{equation}
% In other words,  we only need $\mathcal{S}^{(1)}_{\bm z} \subseteq \mathcal{S}^{(2)}_{\bm z}$ or $\mathcal{S}^{(1)}_{\bm z} \supseteq \mathcal{S}^{(2)}_{\bm z}$ for every $\bm z \in \mathcal{Z}$.
% In sequential CRTs, \eqref{eq:nested} is immediately satisfied since  the refinement of any two partitions  $\mathcal{R}^{(j)}$ and $\mathcal{R}^{(k)}$  is $\underline{\mathcal{R}}^{\{j,k\}} = \mathcal{R}^{(\max(j,k))}.$
When $K=2$, condition \eqref{eq:cond-indep-3} is equivalent to
\begin{equation}\label{equ:indep_Tz}
  t^{(1)}(\bm Z, \bm W) \independent t^{(2)}(\bm Z, \bm W) \mid \bm Z \in \mathcal{S}^{(1)}_{\bm
    z} \cap \mathcal{S}^{(2)}_{\bm z}, \bm W, \quad \forall \bm z
  \in \mathcal{Z},
\end{equation}
\Cref{equ:indep_Tz} is satisfied by our sequential CRTs in \Cref{sec:sequential-crts}, where  $t^{(j)}(\bm Z, \bm W)$  is a function of $\bm V^{[j]}$ and $\bm W$
and the condition  $\bm Z \in \mathcal{S}_{m}^{(k)}$ fixes all the randomness in $\bm V^{(j)}$ for  $j<k$.

\section{Discussion}
\label{sec:discussion}

%\Cref{sec:hypothesis-design} shows that partially sharp null hypotheses in complex experimental designs render the classical Fisher randomization tests infeasible for causal inference.
%\Cref{sec:single-crt} addresses this challenge by defining conditional randomization tests (CRTs) with a computable and valid p-values.

In this article, we demonstrated how to generate
nearly independent and powerful p-values through a sequential construction of multiple conditional randomization tests. Our idea led to new methods for testing lagged treatment effects in stepped-wedge randomized trials and spillover effects in spatial experiments.
Our methods are model-free and thus robust against model misspecification, as confirmed by our numerical experiments.

% In the last section, we generalized our theory to accommodate non-strictly nested conditioning sets. In future works, it would be interesting to explore experimental designs with non-standard hypotheses that can benefit from using randomization tests based on non-completely nested conditioning sets.

%This construction extends the theory for multiple evidence factors in observational studies. Moreover,

%We demonstrated that
%our construction of multiple CRTs is useful in

Our theory assumes a randomized experiment. In observational studies,
matching by observed confounders is often used to mimic a randomized
experiment using observational study  \citep{rosenbaum2002observational}. This heuristic has been more
formally investigated recently
\citep{pimentel2022covariate,guo2023statistical}. It would be
interesting to study if the theory presented here can be extended to
randomization inference for observational studies.

% The complication here is that the matched sets are often a random subset of units
% that depend on the treatment assignment, so our theory cannot directly apply because
% it assumes Z is fixed. This subtle issue is usually ignored in the randomization inference
% literature, which typically treats the matched observational sample as
% a block-randomized experiment
% \citep{rosenbaum2002observational}. There is

In another line of research, it has been shown that a
two-sample permutation test or randomization test with a carefully
constructed test statistic is asymptotically valid for testing
equality of a summary parameter between the samples
\citep{Chung2013,bertanha2023permutation}, average treatment effects
\citep{cohen2020gaussian,fogarty2021prepivoted,zhao2021covariate},
``bounded'' null hypotheses \citep{caughey2021randomization}, or
quantiles of individual treatment effects
\citep{caughey2021randomization}. It may be possible to
combine these techniques and the sequential construction in this
article to composite null hypotheses and obtain nearly independent
p-values.

It would also be interesting to consider more complex design spaces
and their topological structures in future works. For example, \citet{auerbach2022testing} recently introduced a randomization
test for the hypothesis that two networks are drawn from the same
random graph model. As another example, unlike in the stepped-wedge
design, a unit can switch to any treatment arm at
any time in general cross-over designs or panel experiments
\citep{bojinov2021panel,shahn2023formal}. The theory outlined in this article in principle applies to any experimental design, but some non-trivial adaptions are needed for each specific setting as shown in the examples above.

%\begin{comment}
%Besides applications in causal inference, our theory can be useful for
%developing approximation methods for large-scale randomization tests via
%multiple small CRTs. \qz{Anything concrete here? I prefer to not be
%  too speculative about future work.} Following our
%analysis of p-value combination
%methods and the non-asymptotic power analysis for permutation tests in
%\citet{albert2015tests,kim2020minimax}, it is also interesting to
%study under what condition(s), a combination of randomization tests
%has non-trivial power and how to best combine their p-values in finite
%samples.
%\end{comment}
%echoing
%\citet[page F-1]{brillinger1978role} and
%\citet{rubin80comment}, we think randomization test should be viewed as a (if not the most) pristine
%way of inference for a very specific task---testing a (partially) sharp null hypothesis. It is a powerful weapon for statisticians, but certainly not the ultimate weapon. Nevertheless,

\bibliographystyle{plainnat}
\bibliography{main_mcrts}

\appendix

\clearpage

\begin{appendices}
 \section{Method of combining p-values}
\label{sec:combine}

A remaining question is how to combine the permutation tests obtained
from \Cref{alg:mcrts} (MCRTs) efficiently for testing spillover effects in \Cref{sec:swd}. \citet{heard2018choosing}
compared several p-value combination methods in the literature. By
recasting them as likelihood ratio tests, they demonstrated that the
power of a combiner crucially depends on the distribution of the p-values under the
alternative hypotheses. In large samples, the behaviour of permutation
tests is well-studied in the literature. \citet[Theorem
15.2.3]{lehmann2006testing} showed that if the test statistics in a
permutation test converges in distribution, the permutation
distribution will converge to the same limiting distribution in
probability. These results form the basis of our investigation of
combining CRTs.

% Suppose that the p-values
% follow the standard uniform distribution under the null, which is the
% case when post-randomization is used to correct the discontinuity in
% randomizatio ntests.

Suppose that $K$ permutation p-values $P^{(1)},\dotsc, P^{(K)}$ are
obtained from \Cref{alg:mcrts}. Suppose the $k$th test statistic,
when scaled by $\sqrt{N}$, is asymptotically normal so that it
converges in distribution to $T_{\infty}^{(k)} \sim \mathcal{N}
(\tau_l, V_{\infty}^{(k)})$
under
the null hypothesis ($\tau_l = 0$) and some
local alternative hypothesis ($\tau_l = h/ \sqrt{N}$) indexed by
$h$. The limiting distributions have the same mean but different
variances. Suppose that we define the p-value likelihood ratio as the product of the alternative p-value distributions for all $k$ divided by the product of the null p-value distributions for all $k$.
Since the permutation p-values are standard uniform,
it is easy to verify that the logarithm of this p-value likelihood ratio is
proportion to a weighted sum of $Z$-scores, $T_{\infty} =
\sum_{k=1}^{K}   w_{\infty}^{(k)} \Phi^{-1}(P^{(k)})$. This motivates a
weighted Stouffer's method \citep{stouffer1949american} for combing the
p-values. The weights are non-negative and sum to one, thus $T_{\infty} \sim \mathcal{N}(0,1)$ if the p-values are independent. Then we can
reject $\bigcap_{k\in[K]} H_{0}^{(k)}$ if  $\Phi(T_{\infty})\leq
\alpha$. This test is shown to be uniformly most powerful for
all $h$ in the normal location problem \citep{heard2018choosing}.

To formalize the discussion above, we consider the difference-in-means
statistics as an example. Following the notation in \Cref{alg:mcrts}
(see lines 6 and 11), the treated group $\mathcal{I}_1^{(k)}$ for the
$k$th CRT crosses
over at time $k$ and the control group $\mathcal{I}_0^{(k)}$ crosses
over time  $t\in \mathcal{T}^{(k)}\setminus \{k\}$, where
$\mathcal{T}^{(k)}\setminus \{k\}$ is a subset of $\mathcal{C}_j$ that
collects some time points after $k+l$.

Let $N_1^{(k)} = |\mathcal{I}_1^{(k)}|$, $N_0^{(k)} = |\mathcal{I}_0^{(k)}|$ and $N^{(k)} = N_0^{(k)} + N_1^{(k)}$
Let $A_i^{(k)} = 1 \text{ or } 0 $ denote unit $i \in
\mathcal{I}_0^{(k)} \cup \mathcal{I}_1^{(k)}$ starting the treatment
at time $k$ or after $k+l$, i.e.,
$ \bm Z_{i,k+l} = (\bm 0_{k-1},1,\bm 0_{l}) \text{ or }  \bm
0_{k+l}$. To simplify the exposition in the results below, we use the abbreviations
\[
Y_{i}^{(k)} = Y_{i,k+l},\ \  Y_{i}^{(k)}(1) = Y_{i,k+l}(\bm 0_{k-1},1,\bm 0_{l}) \ \text{ and }\  Y_{i}^{(k)}(0) = Y_{i,k+l}(\bm 0_{k+l})
\]
We further assume that the $N$ units are i.i.d draws from a super-population model, and
every unit's potential outcome $Y_i^{(k)}(a)$ is a
random copy of some generic $Y^{(k)}(a)$ for $a\in\{0,1\}$ and $k\in
{K}$.

%The $k$-th null hypothesis of lag $l$ effect is rewritten as
%\[
%H_{0}^{(k)}:   Y_{i}^{(k)}(1) =Y_{i}^{(k)}(0),\forall i\in [N].
%\]

%The $k$-th local alternative hypothesis is given by
%\[
%H_{1}^{(k)}:   Y_{i}^{(k)}(1) =Y_{i}^{(k)}(0)+h/\sqrt{M^{(k)}},\forall i\in [N].
%\]
%The constant effect in $H_{1}^{(k)}$ is divided by $\sqrt{M^{(k)}}$ so that not all the (combined) tests have power equal to $1$ as the sample size goes to infinity.

\begin{proposition}\label{thm:combine}
Suppose that the $K$ permutation tests in \Cref{alg:mcrts} use the difference-in-means statistics.
We assume that for every $k\in [K]$ and $a\in \{0,1\}$,
\begin{enumerate}
\item $Y^{(k)}(a)$ has a finite and nonzero variance;
\item $N_1^{(k)}/N$ and $N_0^{(k)}/N$ are fixed as $N\rightarrow\infty$;
\end{enumerate}
The weighted $Z$-score combiner $T_{\infty}$ is then given with weights $w_{\infty}^{(k)} = \sqrt{\frac{\Lambda^{(k)}}{ \sum_{j=1}^{K} \Lambda^{(j)} }},$ where
\begin{equation}
  \label{eq:lambda-k}
\Lambda^{(k)} = \left(\frac{N}{N_0^{(k)}} \Var\big[Y^{(k)}(1)\big] + \frac{N}{N_1^{(k)}} \Var\big[Y^{(k)}(0)\big] \right)^{-1}  \end{equation}
is the inverse of the asymptotic variance $V_{\infty}^{(k)}$  of the statistics in the $k$-th test.
\end{proposition}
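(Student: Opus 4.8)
The plan is to reduce each CRT in \Cref{alg:mcrts} to a standard two-sample difference-in-means permutation test, read off the limiting distribution of its $Z$-score $\Phi^{-1}(P^{(k)})$ under the local alternative, and then optimize the weights in the resulting Gaussian-location combination problem. First I would fix $k$ and condition on the sub-$\sigma$-algebra that fixes which units cross over at the times indexing test $k$, so that $\mathcal{I}_1^{(k)}$ and $\mathcal{I}_0^{(k)}$ form a uniformly random split of the $N^{(k)} = N_0^{(k)} + N_1^{(k)}$ retained units into groups of sizes $N_1^{(k)}$ and $N_0^{(k)}$. By \Cref{assump:randomization} the treatment is independent of the potential-outcome schedule, so this conditioning leaves the super-population law undisturbed and the pairs $(Y_i^{(k)}(0), Y_i^{(k)}(1))$ remain i.i.d.; moreover under $H^{(k)}$, and under the constant local alternative $\tau_l = h/\sqrt N$, we have $Y_i^{(k)}(1) = Y_i^{(k)}(0) + \tau_l$, so $\Var[Y^{(k)}(1)] = \Var[Y^{(k)}(0)] =: \sigma_k^2 \in (0,\infty)$ by assumption (i). A two-sample central limit theorem, valid since $N_1^{(k)}/N$ and $N_0^{(k)}/N$ stay bounded away from $0$ by assumption (ii), would then give for the difference-in-means $\hat\tau^{(k)}$ that $\sqrt N (\hat\tau^{(k)} - \tau_l) \xrightarrow{d} \mathcal{N}(0, V_\infty^{(k)})$ with $V_\infty^{(k)} = \lim_N (N/N_1^{(k)} + N/N_0^{(k)})\sigma_k^2$; because $\sigma_k^2 = \Var[Y^{(k)}(1)] = \Var[Y^{(k)}(0)]$, this equals $\tfrac{N}{N_0^{(k)}}\Var[Y^{(k)}(1)] + \tfrac{N}{N_1^{(k)}}\Var[Y^{(k)}(0)] = 1/\Lambda^{(k)}$, which is the identification of $V_\infty^{(k)}$ claimed in \eqref{eq:lambda-k}.

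Next I would invoke the randomization central limit theorem (\citet[Theorem 15.2.3]{lehmann2006testing}, as quoted above): the permutation distribution of $\sqrt N \hat\tau^{(k)}$ converges in probability, uniformly, to $\mathcal{N}(0, V^{(k)}_{\mathrm{perm}})$, with $V^{(k)}_{\mathrm{perm}}$ the limit of $\tfrac{N N^{(k)}}{N_0^{(k)} N_1^{(k)}}$ times the pooled sample variance. The pooled population is a mixture of the observed treated and control outcomes with weights $N_1^{(k)}/N^{(k)}$ and $N_0^{(k)}/N^{(k)}$, so its variance converges to $\tfrac{N_1^{(k)}}{N^{(k)}}\sigma_k^2 + \tfrac{N_0^{(k)}}{N^{(k)}}\sigma_k^2 + \tfrac{N_0^{(k)}N_1^{(k)}}{(N^{(k)})^2}\tau_l^2 \to \sigma_k^2$, the between-group term vanishing since $\tau_l \to 0$ under both the null and the local alternative; hence $V^{(k)}_{\mathrm{perm}} = V_\infty^{(k)}$. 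Because $P^{(k)}$ is (one minus) this permutation distribution function evaluated at the observed $\hat\tau^{(k)}$ and $\sqrt N \hat\tau^{(k)} = O_P(1)$, I obtain $P^{(k)} = \Phi(-\sqrt N \hat\tau^{(k)}/\sqrt{V_\infty^{(k)}}) + o_P(1)$ (sign per the one-sided convention), which re-confirms asymptotic validity under the null and, under the local alternative, gives $\Phi^{-1}(P^{(k)}) \xrightarrow{d} \mathcal{N}(-h\sqrt{\Lambda^{(k)}}, 1)$.

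Since \Cref{alg:mcrts} satisfies the hypotheses of \Cref{thm:general-3} (verified in \Cref{sec:sw-nested-crts}), the p-values are jointly valid with asymptotically negligible dependence, so $(\Phi^{-1}(P^{(1)}), \dots, \Phi^{-1}(P^{(K)}))$ converges to a Gaussian vector with independent coordinates, the $k$th having mean $-h\sqrt{\Lambda^{(k)}}$ and unit variance. Then $T_\infty = \sum_k w_\infty^{(k)}\Phi^{-1}(P^{(k)}) \xrightarrow{d} \mathcal{N}(-h\sum_k w_\infty^{(k)}\sqrt{\Lambda^{(k)}},\ \sum_k (w_\infty^{(k)})^2)$, and under the normalization $\sum_k (w_\infty^{(k)})^2 = 1$ (so that $T_\infty \sim \mathcal{N}(0,1)$ under the null) the asymptotic power of the test $\{\Phi(T_\infty) \leq \alpha\}$ is a monotone function of $\sum_k w_\infty^{(k)}\sqrt{\Lambda^{(k)}}$. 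By Cauchy--Schwarz this quantity is maximized over $w_\infty^{(k)} \geq 0$ with $\sum_k (w_\infty^{(k)})^2 = 1$ exactly at $w_\infty^{(k)} = \sqrt{\Lambda^{(k)} / \sum_j \Lambda^{(j)}}$; equivalently, the log-likelihood-ratio statistic for the Gaussian-location family indexed by $h$ is proportional, up to an additive constant, to $\sum_k \sqrt{\Lambda^{(k)}}\Phi^{-1}(P^{(k)})$, which yields the same weights after unit-variance normalization (consistent with \citet{heard2018choosing}). This is the claimed combiner.

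The hard part will be matching the limiting permutation variance to the limiting variance of the observed statistic in the second step: this equality fails for unequal group sizes under a generic alternative, and holds here only because $H^{(k)}$ and the local alternatives impose a constant treatment effect, which forces $\Var[Y^{(k)}(1)] = \Var[Y^{(k)}(0)]$ and makes the between-group contribution to the pooled variance asymptotically negligible. A secondary subtlety is the rigorous passage from convergence of the random permutation distribution function to the representation $P^{(k)} = \Phi(-\sqrt N \hat\tau^{(k)}/\sqrt{V_\infty^{(k)}}) + o_P(1)$, and then to joint convergence of the $\Phi^{-1}(P^{(k)})$, which must rest on the already-established near-independence of the CRTs rather than on the statistics being functions of disjoint sets of units (in general they share units across tests).
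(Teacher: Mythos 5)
Your proposal is correct and follows essentially the same route as the paper's proof: asymptotic normality of the permutation distribution via Lehmann--Romano Theorems 15.2.3/15.2.5 with limiting variance $V_{\infty}^{(k)} = 1/\Lambda^{(k)}$ under the null and the local alternative $\tau_l = h/\sqrt{N}$, followed by the likelihood-ratio (equivalently, your Cauchy--Schwarz) derivation of the weights for the resulting Gaussian location family. The only cosmetic difference is that the paper obtains the crossed-index variance formula in \eqref{eq:lambda-k} by a direct hypergeometric computation of the permutation variance, whereas you obtain it from the sampling variance of the observed statistic together with the observation that the constant-effect hypothesis forces $\Var\big[Y^{(k)}(1)\big] = \Var\big[Y^{(k)}(0)\big]$ --- both yield the same $\Lambda^{(k)}$.
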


The weights in \cref{eq:lambda-k} are obtained by inverting the
asymptotic variance of the difference-in-means statistics; see
\citet[Section 6.4]{imbens2015causal} and \citet{li2017general}.
We can estimate $\Var\big[Y^{(k)}(a)\big]$ in
$\Lambda^{(k)}$ consistently using the sample variance of $Y_i^{(k)},\forall i\in \mathcal I_a^{(k)}$. We denote the estimator of $\Lambda^{(k)}$  by $\hat \Lambda^{(k)}$.
The empirical version of $T_{\infty}$ is given by
\begin{equation}\label{equ:weighting}
  \hat{T} = \sum_{k=1}^{K}  \hat{w}^{(k)} \Phi^{-1}(P^{(k)}) \quad  \text{where}\quad
   \hat{w}^{(k)}= \sqrt{\frac{\hat \Lambda^{(k)}}{ \sum_{j=1}^{K}\hat \Lambda^{(j)}}}.
\end{equation}

\begin{proposition}\label{lemma:p_hat}
        Suppose that the p-values $P^{(1)},\dotsc, P^{(K)}$  from \Cref{alg:mcrts} are valid, the combined p-value $\hat{P}(\bm Z, \bm W) = \Phi(\hat{T} )$ is also valid such that under the null hypotheses,
\[
\mathbb P\{     \hat{P}(\bm Z, \bm W) \leq \alpha               \}\leq \alpha.
\]
\end{proposition}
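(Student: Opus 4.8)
The plan is to reduce the claim to a conditional statement and then exploit the special structure of the p-values produced by \Cref{alg:mcrts}. Since $\Phi$ is strictly increasing, $\{\hat P(\bm Z,\bm Y)\le\alpha\}=\{\hat T\le\Phi^{-1}(\alpha)\}$, so it suffices to prove $\mathbb P(\hat T\le c)\le\Phi(c)$ for every $c\in\mathbb R$. I will condition on a suitable $\sigma$-algebra $\mathcal G$ that simultaneously (i) turns $P^{(1)},\dotsc,P^{(K)}$ into conditionally independent, conditionally (sub-)uniform random variables, and (ii) renders the combining weights $\hat w^{(k)}$ fixed; conditionally on $\mathcal G$ the combined statistic is then a fixed linear combination of independent normals, and a short stochastic-dominance argument closes the proof.

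First I would record what the construction in \Cref{sec:sw-nested-crts} gives about the joint law of the p-values. Conditional on the potential outcome schedule $\bm W$ and on the $\sigma$-algebra $\mathcal G_0:=\sigma(g(\bm Z),\bm W)$ generated by the chain-membership variable $g(\bm Z)$, these p-values are mutually independent and each is (sub-)uniform on $[0,1]$: within a single chain $\mathcal C_j$ the CRTs form a nested sequence, so the law-of-iterated-expectations argument behind \Cref{prop:nested} shows the within-chain p-values are conditionally independent and (sub-)uniform; across chains, \eqref{equ:odd_even} (and its analogue for general $l$) makes the chain-wise collections mutually independent given $g(\bm Z)$. Hence, conditional on $\mathcal G_0$, the scores $\Phi^{-1}(P^{(k)})$ are mutually independent and each stochastically dominates $\mathcal N(0,1)$, since $\mathbb P(\Phi^{-1}(P^{(k)})\le t\mid\mathcal G_0)=\mathbb P(P^{(k)}\le\Phi(t)\mid\mathcal G_0)\le\Phi(t)$.

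Next I would handle the estimated weights. By \eqref{equ:weighting} they satisfy $\hat w^{(k)}\ge 0$ and $\sum_{k=1}^K(\hat w^{(k)})^2=1$. The point is that $\hat\Lambda^{(k)}$ is built from within-group sample variances of the observed outcomes, which under the null hypotheses (and hence given $\bm W$) are functions of the potential outcome schedule together with the conditioning events of \Cref{alg:mcrts}; in the regime of \Cref{thm:combine} each such estimate converges to the deterministic constant $\Lambda^{(k)}$ of \eqref{eq:lambda-k}, so $(\hat w^{(1)},\dotsc,\hat w^{(K)})\to(w_\infty^{(1)},\dotsc,w_\infty^{(K)})$, a deterministic vector with $\sum_k(w_\infty^{(k)})^2=1$. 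Equivalently, one can compute the weights from a pooled variance estimate of the imputed control potential outcomes within each test's conditioning stratum, which is measurable with respect to a $\sigma$-algebra $\mathcal G\supseteq\mathcal G_0$ given which $P^{(1)},\dotsc,P^{(K)}$ remain conditionally independent and (sub-)uniform. Either way, we may work conditionally on a $\sigma$-algebra $\mathcal G$ that makes the weights fixed nonnegative constants $w^{(k)}$ with $\sum_k(w^{(k)})^2=1$ while preserving the conditional independence and (sub-)uniformity of the p-values.

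To finish, note that conditional on $\mathcal G$ the statistic $\hat T=\sum_{k=1}^K w^{(k)}\Phi^{-1}(P^{(k)})$ is a nonnegative linear combination of conditionally independent random variables, each stochastically larger than a standard normal. Because the usual stochastic order is preserved under nonnegative scaling and under sums of independent summands, $\hat T$ is conditionally stochastically larger than $\sum_k w^{(k)}Z_k$ with $Z_1,\dotsc,Z_K$ i.i.d.\ $\mathcal N(0,1)$, and the latter is exactly $\mathcal N(0,\sum_k(w^{(k)})^2)=\mathcal N(0,1)$. Hence $\mathbb P(\hat T\le c\mid\mathcal G)\le\Phi(c)$ for all $c$, and integrating over $\mathcal G$ gives $\mathbb P(\hat T\le c)\le\Phi(c)$, i.e.\ $\mathbb P(\hat P(\bm Z,\bm Y)\le\alpha)\le\alpha$. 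The hard part is the third step: the estimated weights are not a priori measurable with respect to any $\sigma$-algebra under which the p-values decouple — the within-group variance estimates inspect the realized treatment split, and the normalization $\sum_k(\hat w^{(k)})^2=1$ couples all the tests — so making the conditioning argument rigorous requires either passing to the large-sample regime of \Cref{thm:combine}, where the weights converge to deterministic constants, or re-expressing the weights through conditioning-measurable variance functionals and checking the normalization is still respected; everything else is routine.
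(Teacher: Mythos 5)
Your overall strategy matches the paper's in outline: fix the combining weights by conditioning, show that $\hat T$ stochastically dominates a standard normal using $\sum_k(\hat w^{(k)})^2=1$, and rely on sample splitting for independence across the chains $\mathcal C_j$. The genuine difference, and the one soft spot, is the within-chain step. You assert that given $\mathcal G_0=\sigma(g(\bm Z),\bm W)$ the p-values within a chain are mutually independent and sub-uniform, and you then invoke preservation of stochastic order under sums of independent summands. For discrete randomization distributions this independence claim is not justified: each $P^{(k)}$ is only sub-uniform, and its conditional law given $\mathcal G^{(k)}$ can vary across atoms (e.g.\ with the cardinality of the conditioning set), so it can be correlated with the earlier, $\mathcal G^{(k)}$-measurable p-values. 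What actually holds is the weaker sequential property that $P^{(k)}$ is sub-uniform conditional on $\mathcal G^{(k)}$ while $P^{(1)},\dotsc,P^{(k-1)}$ and the weights are $\mathcal G^{(k)}$-measurable, and the paper's proof uses exactly this: it peels off $P^{(K)},P^{(K-1)},\dotsc$ one at a time by iterated expectations, replacing each by an independent uniform, to conclude $\hat T$ dominates $\tilde T\sim\mathcal N(0,1)$ without ever needing joint independence. Since you already cite the law-of-iterated-expectations argument behind \Cref{prop:nested} as the source of the within-chain structure, substituting that sequential domination step for your ``sum of independent summands'' step is a local repair rather than a change of approach.

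On the weights, you are right to flag the measurability problem: $\hat w^{(k)}$ is built from within-group sample variances and is therefore a function of the realized assignment, not of $\bm W$ alone, and the normalization couples the tests. The paper's proof simply asserts that conditioning on the potential outcomes fixes the weights and suppresses this issue, so your discussion is, if anything, more candid than the published argument; but note that the paper does not pass to the asymptotic regime of \Cref{thm:combine} here, so a fully rigorous finite-sample treatment of data-dependent weights remains open in both versions.
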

% The proof of \Cref{lemma:p_hat} is given in \Cref{proof:p_hat}, which is applicable to any standard p-value combiner in forms of (weighted) sums.

In general, improving the statistical power in combining multiple CRTs is an
independent task after choosing an efficient test statistics. % Once the test statistics is chosen, we can
 % consider deriving a suitable method of combining the p-values from
 % the most powerful likelihood ratio test.
Different test statistics may have different asymptotic
distributions and the optimal p-value combiners (if exist) may be
different. Nonetheless, a key insight from the discussion above is
that we should weight the CRTs appropriately, often according to their
sample size. % In
% practice, we need to make sure
% the weights are
% estimable.
%  If a test statistics is asymptotical normal, a weighted $Z$-score
%  combiner in the form of \cref{equ:weighting} can be derived by
%  formulating a likelihood ratio test on the p-values. Some statistics
%  may impose stronger assumptions on the potential outcomes for
%  establishing  their asymptotical normality.

\section{Confidence intervals from randomization tests}\label{appenx:invert}

Here we describe how to invert (a combination of) permutation tests
and the complexity involves; see also \citet[Section 3.4]{ernst2004permutation} for an introduction.
Consider a completely randomized experiment with treated outcomes $(Y_1,\dotsc,Y_m)$ and control outcomes $(Y_{m+1},\dotsc,Y_{m+n})$.

Consider the constant effect null hypothesis
\[
H_0: Y_{i}(1) = Y_{i}(0) +\Delta,\forall i \in [m+n].
\]
We can implement a  permutation test for $H_0$ by testing the zero-effect hypothesis on the shifted outcomes $\bm Y_{\Delta} = (Y_1-\Delta,\dotsc,Y_m-\Delta,Y_{m+1},\dotsc,Y_{m+n})$.
To construct a confidence interval for the true constant treatment effect $\tau$, we can consider all values of $\Delta$ for which we do not reject $H_0$. We define the left and right tails of the randomization distribution of $t(\bm Z^*,\bm Y_{\Delta})$ by
\[
P_1(\Delta) = \mathbb P^*\{t(\bm Z^*,\bm Y_{\Delta})\leq t(\bm Z,\bm Y_{\Delta})\} \text{ and } P_2(\Delta) = \mathbb P^*\{t(\bm Z^*,\bm Y_{\Delta})\geq t(\bm Z,\bm Y_{\Delta})\},
\]
where $\bm Z = [\bm 1_{m}/m,-\bm 1_{n}/n]$ is observed assignment, $\bm Z^*$ is a permutation of $\bm Z$, $T$ is the test statistics, e.g., difference-in-means, $t(\bm Z,\bm Y_{\Delta}) = \bm Z^{\top} Y_{\Delta}$. The complexity of computing $P_1(\Delta)$ and $P_2(\Delta)$
with $b$ different permutations is $O(mb+nb)$.  The two-sided $(1-\alpha)$-confidence interval for $\Delta$ is given by
$[\Delta_L,\Delta_U] := \left[ \min_{P_2(\Delta) >\alpha/2}\Delta, \max_{P_1(\Delta) >\alpha/2}\Delta \right].$
We use a grid search to approximately find the minimum and maximum
$\Delta$ in $[\Delta_L,\Delta_U]$. Since $P_1(\Delta)$ and
$P_2(\Delta)$ are monotone functions of $\Delta$, we can also consider
obtaining the optimal $\Delta$'s via a root-finding numerical method
\citep[see e.g.][]{garthwaite1996confidence}.

Inverting a combination of permutation tests can be done similarly. For example, by searching the same $\Delta$'s for every permutation test,
the lower confidence bound $\Delta_L$ is given by the minimum $\Delta$ under which the p-value of the combined test (e.g. $\hat P $ in \Cref{lemma:p_hat}) is larger than $\alpha/2$.
The complexity of inverting a combined test scales linearly in terms of the number of tests.
The total complexity is manageable as long as the numbers of units and permutations are not very large at the same time.

If we have covariates information in the dataset, we can consider fitting a linear regression model (with basis functions for nonlinearity). We can compute the matrix inversion in the least-squares solution once, then update the solution by shifting the outcome vector with different $\Delta$'s. Inverting the test returns an interval with coverage probability approximately equal to  $1-\alpha$. When the probability is slightly below $1-\alpha$, we can decrease $\alpha$ by a small value (e.g. 0.0025) gradually and reconstruct the interval based on the previously searched $\Delta$'s. We stop if we obtain an interval with coverage probability above $1-\alpha$.

%\vspace{-10pt}

\section{Pseudocode for multiple conditional randomization tests (MCRTs)}\label{appendix:randomization}

%\begin{figure}[h]
%  \centering
 %   \includegraphics[width = 0.99\textwidth]{./}
 %   \caption{Example randomization distributions for testing  $H^{(1)},\dotsc, H^{(4)}$ in %\Cref{sec:spillover}. }
%  \label{fig:interference_3}
%\end{figure}

%\begin{figure}[h]
%  \centering
%    \includegraphics[width = 0.99\textwidth]{./}
%    \caption{Subsets of units $\mathcal{I}^{[k:K]}$ (red) and   $\mathcal{J}^{(k)}$ (green) for %$k=1,2,3$ and 4 in \Cref{sec:spillover}. }
%  \label{fig:interference_4}
%\end{figure}

See \Cref{alg:mcrts}.

\begin{algorithm}[h]\label{alg:mcrts}
    \caption{Multiple conditional randomization tests (MCRTs) for
      testing lag-$l$ treatment effect in stepped-wedge randomized
      controlled trials}\label{alg:mcrts}
    \hspace*{\algorithmicindent}
    \begin{algorithmic}[1]
    \vspace{-5pt}
   \State    \textbf{Input:} Number of units $N$, Number of time steps $T$, Time lag $l$, Outcomes $\bm{Y} = (Y_{it}:i\in [N],t\in [T])$,  Assignments $\bm Z = (Z_{it}:i\in [N],t\in [T])$, Statistics $T$.
  \State \textbf{Initialization:} $J \gets \min(l+1,T-l-1)$ and $\mathcal{I}_t \in \{i\in [N]: Z_{it}=1  \}, \forall t\in [T]$
  \vspace{2pt}
        \For {$j \in [J]$}                              \Comment{Divide the time steps $[T]$ into $J$ subsets }
\vspace{1pt}
\State
\vspace{1pt} $t \gets j  $, $\mathcal{C}_j \gets \{t\}  $
        \While{$t+l+1 \leq T$}
\vspace{1pt}
\State
\vspace{1pt} $t \gets t  + l +1$, $\mathcal{C}_j \gets \mathcal{C}_j \cup\{t\}  $
\EndWhile
\EndFor
\For {$j \in [J]$}
\For {$k \in \mathcal{C}_j$}            \Comment{Define the $k$-th permutation test}
\State
$\mathcal{T}^{(k)}\gets  \big\{t\in \mathcal{C}_j: t\geq k \big\}$\vspace{1pt}
\State
\vspace{1pt}
  $\mathcal{I}_1^{(k)} \gets \mathcal{I}_{k}$, $\bm Y_{\text{treated}}^{(k)} \gets \big\{Y_{i,k+l}:i \in \mathcal{I}_1^{(k)}\big\}$
\vspace{1pt}
 \State
\vspace{1pt}
   $\mathcal{I}_0^{(k)} \gets (\bigcup_{t\in \mathcal{T}^{(k)} }        \mathcal{I}_t\big)      \setminus \mathcal{I}_{k}$, $\bm Y_\text{control}^{(k)} \gets \big\{Y_{i,k+l}:i \in \mathcal{I}_0^{(k)}\big\}$                 
\vspace{1pt}
\State
\vspace{1pt} $P^{(k)}(\bm Z,\bm Y) \gets  \text{Permutation Test} \big(\bm Y_{\text{treated}}^{(k)},\bm Y_\text{control}^{(k)}; T \big) $

\EndFor
\EndFor
\vspace{1pt}
\State  \textbf{Output:} P-values $\big\{P^{(k)}:=P^{(k)}(\bm Z,\bm Y)\big\}$
        \end{algorithmic}
    
\end{algorithm}

\section{Technical Proofs}
\label{sec:app-proof}

\subsection{Proof of \Cref{thm:valid}}\label{sec:proof-crefl-2}

\begin{proof}
  We first write the p-value in \cref{eq:p-value} as a probability
  integral transform. For any fixed $\bm z \in \mathcal{Z}$, let
  $F_{\bm z}(\cdot;\bm W)$ denote the distribution function of $T(\bm Z, \bm W)$ given $\bm W$ and $\bm Z \in \mathcal{S}_{\bm
    z}$. Given $\bm Z \in \mathcal{S}_{\bm
    z}$ (so $\mathcal{S}_{\bm Z} = \mathcal{S}_{\bm z}$ by \Cref{def:CRT}), the p-value can be written
  as
$    P(\bm Z, \bm W) = F_{\bm z}(T_{\bm z}(\bm Z, \bm W);\bm W).$
For any random variable $T$ and its distribution function $F(t) = \P(T \le t)$ we have
  $\mathbb{P}(F(T) \leq \alpha) \leq \alpha$ for all $0 \leq \alpha
  \leq 1$; see \Cref{lem:pit} below. % \Cref{sec:proof-lem-pit} for a proof
  By the law of total probability,
    \begin{align*}
    \mathbb{P}\left\{ P(\bm Z, \bm W)\leq \alpha \mid V, \bm W \right\} &=
                                                                       \sum_{m=1}^M \mathbb{P}\left\{ P(\bm Z, \bm W)\leq \alpha \mid \bm
                                                                       Z \in \mathcal{S}_m, \bm W \right\} 1_{\{V=m\} } \\
                                                                     &\leq
                                                                       \sum_{m=1}^M
                                                                       \alpha
                                                                       1_{\{\bm
                                                                       Z \in \mathcal{S}_m\} }
                                                                       =
                                                                       \alpha.
  \end{align*}
  Marginalizing over the potential outcomes schedule $\bm W$, we attain the last claim in the theorem.
\end{proof}

% \subsection{Proof of \Cref{lem:pit}}

% \label{sec:proof-lem-pit}

\begin{lemma} \label{lem:pit}
  Let $T$ be a random
  variable and $F(t) = \P(T \le t)$ be its distribution function. Then
  $F(T)$ has a distribution that stochastically
  dominates the uniform distribution on $[0,1]$.
\end{lemma}
\begin{proof}
  Let $F^{-1}(\alpha ) = \sup \{ t\in
  \mathbb{R} \mid F(t)\leq \alpha
  \}$. We claim that $\mathbb{P}\{ F(T)\leq \alpha \} = \mathbb{P}\{T
  < F^{-1}(\alpha) \}$; this can be verified by considering whether
  $T$ has a positive mass at $F^{-1}(\alpha)$ (equivalently, by
  considering whether $F(t)$ jumps at $t = F^{-1}(\alpha)$). Then, we have
\[
 \mathbb{P}\left\{ F(T)\leq \alpha \right\} = \mathbb{P}\left\{T < F^{-1}(
      \alpha) \right\} =
    \lim_{t \uparrow F^{-1}(
      \alpha)} F(t) \leq \alpha,
\]
using the fact that $F(t)$ is non-decreasing and right-continuous.
\end{proof}

\subsection{Proof outline of \Cref{thm:general-3}}
\label{sec:hasse-diagr-cond}

We now outline a proof of \Cref{thm:general-3}.  We start with the
following observation. (All proofs of the technical results can be
found in the following sections.)

\begin{lemma} \label{lemma:disjoint-or-nested}
  Suppose \cref{eq:nested} is satisfied. Then for any $\mathcal{J}
  \subseteq [K]$ and $\mathcal{S},\mathcal{S}' \in
  \mathcal{R}^{\mathcal{J}}$, the sets $\mathcal{S}$ and
  $\mathcal{S}'$ are either disjoint or nested, that is,
\[
\mathcal{S}\cap \mathcal{S}'\in \{ \emptyset, \mathcal{S},\mathcal{S}'\}.
\]
  Furthermore, we have
  $\underline{\mathcal{R}}^{[K]} \subseteq \mathcal{R}^{[K]}$ and
  $\overline{\mathcal{R}}^{[K]}\subseteq \mathcal{R}^{[K]}$.
\end{lemma}

The sets in $\mathcal{R}^{[K]}$ can be partially ordered by set
inclusion. This induces a graphical structure on $\mathcal{R}^{[K]}$:
% We would show that by
% construction, the diagram is a directed acyclic graph (DAG). Roughly
% speaking, given any $\mathcal{S}\in \mathcal{R}^{(k)}$, $k\in [K]$,
% \Cref{lemma:general_lemma} shows that the stochastic dominance holds
% partially for the CRTs conditional on $\bm{Z}\in \mathcal{S}$ and
% $\mathcal{S}'$ which are descendants of $\mathcal{S}$ in the diagram.

\begin{definition}\label{def:hasse}
  The \emph{Hasse diagram} for $\mathcal{R}^{[K]}  =
  \{\mathcal{S}_{\bm z}^{(k)}:\bm
  z \in \mathcal{Z}, k \in [K]\}$ is a graph where each node in the
  graph is a set in $\mathcal{R}^{[K]}$ and a
  directed edge $\mathcal{S} \rightarrow \mathcal{S}'$ exists
  between two distinct nodes $\mathcal{S},\mathcal{S}' \in
  \mathcal{R}^{[K]}$ if $\mathcal{S} \supset \mathcal{S}'$ and there is no
  $\mathcal{S}'' \in \mathcal{R}^{[K]}$ such that $\mathcal{S}
  \supset \mathcal{S}''\supset \mathcal{S}'$.
\end{definition}

It is straightforward to show that all edges in the Hasse diagram for
$\mathcal{R}^{[K]}$ are directed and this graph has no cycles. Thus,
the Hasse diagram is a directed acyclic graph.

For any node
$\mathcal{S} \in \mathcal{R}^{[K]}$ in this graph, we can further
define its parent set as
\[
\text{pa}(\mathcal{S}) = \{\mathcal{S}^{'} \in
\mathcal{R}^{[K]} : \mathcal{S}' \to \mathcal{S}\},
\]
child set as
$\text{ch}(\mathcal{S}) = \{\mathcal{S}^{'} \in \mathcal{R}^{[K]} :
\mathcal{S} \to \mathcal{S}'\}$, ancestor set as $\text{an}(\mathcal{S})
= \{\mathcal{S}'\in \mathcal{R}^{[K]} : \mathcal{S}' \supset
\mathcal{S} \}$, and descendant set as $\text{de}(\mathcal{S}) =
\{\mathcal{S}'\in \mathcal{R}^{[K]} : \mathcal{S} \supset \mathcal{S}'
\}$.
We note that one conditioning set $\mathcal{S} \in \mathcal{R}^{[K]}$
can be used in multiple CRTs. To fully characterize this structure, we
introduce an additional notation.

\begin{definition}
  For any $\mathcal{S} \in \mathcal{R}^{[K]}$,
  let $\mathcal{K}(\mathcal{S}) = \{k \in [K]: \mathcal{S} \in
  \mathcal{R}^{(k)}\}$ be the collection of indices such that
  $\mathcal{S}$ is a conditioning set in the corresponding
  test. Furthermore, for any collection of conditioning sets
  $\mathcal{R}\subseteq \mathcal{R}^{[K]}$, denote
  $\mathcal{K}(\mathcal{R}) = \cup_{\mathcal{S} \in \mathcal{R}}
  \mathcal{K}(\mathcal{S})$.
\end{definition}

\begin{lemma}\label{lem:hasse-structure}
  Suppose \cref{eq:nested} is satisfied. Then for any $\mathcal{S}
  \in  \mathcal{R}^{[K]}$, we have
  \begin{enumerate}[label=(\roman*)]
  \item \label{equ:hasse1} If $\text{ch}(\mathcal{S})\neq
    \emptyset$, then
    $\text{ch}(\mathcal{S})$ is a partition of
    $\mathcal{S}$;
  \item \label{equ:hasse2} $\{\mathcal{K}(\text{an}(\mathcal{S})),
    \mathcal{K}(\mathcal{S}), \mathcal{K}(\text{de}(\mathcal{S}))\}$
    forms a partition of $[K]$.
  \item \label{equ:hasse3} For any $\mathcal{S}' \in \text{ch}(\mathcal{S})$,  $\mathcal{K} ( \text{an}(\mathcal{S}')) = \mathcal{K} (
    \text{an}(\mathcal{S})\cup \{\mathcal{S}\})$ and
    $\mathcal{K} (\{\mathcal{S}'\}\cup  \text{de}(\mathcal{S}'))
    = \mathcal{K} ( \text{de}(\mathcal{S}))$.
  \end{enumerate}
\end{lemma}

Using \Cref{lem:hasse-structure}, we can prove the following key lemma
that establishes the conditional independence between the p-values.

\begin{lemma} \label{lem:p-value-cond-indep}
  Suppose \cref{eq:nested} and \cref{eq:cond-indep-3} are
  satisfied. Then for any $\mathcal{S}\in \mathcal{R}^{[K]}$ and $j\in
  \mathcal{K}(\mathcal{S})$,
  \begin{equation*}\label{equ:nested_indep}
    P^{(j)}(\bm Z, \bm W) \independent \left( P^{(k)}(\bm Z, \bm W) \right)_{k\in
  \mathcal{K}(\text{an}(\mathcal{S})\cup \{\mathcal{S}\}) \setminus
  \{j\}} \mid \bm Z\in \mathcal{S},\bm W.
  \end{equation*}
\end{lemma}

Finally, we state a general result based on the above Hasse diagram,
from which \Cref{thm:general-3} almost immediately follows.%  that is
% more general than \Cref{thm:general-3}.

\begin{lemma}\label{lemma:general_lemma}
  Given conditions \cref{eq:nested} and \cref{eq:cond-indep-3}, we
  have, for any $\mathcal{S} \in \mathcal{R}^{[K]}$,
  \begin{equation}\label{equ:general_lemma}
    \begin{split}
      &~\mathbb{P}\left\{P^{(1)}(\bm Z, \bm W) \leq \alpha^{(k)},\dotsc, P^{(K)}(\bm
        Z, \bm W) \leq \alpha^{(K)} \mid \bm Z \in \mathcal{S}, \bm
        W\right\} \\
      \leq&~ \mathbb{P}\left\{P^{(k)}(\bm Z, \bm W) \leq \alpha^{(k)}
        \text{ for } k \in
        \mathcal{K}(\text{an}(\mathcal{S})) \mid \bm Z \in
        \mathcal{S}, \bm W\right\} \prod_{j \in
        \mathcal{K}(\{\mathcal{S}\} \cup \text{de}(\mathcal{S}))}
      \alpha_{j}.
    \end{split}
  \end{equation}
\end{lemma}

\subsection{Proof of \Cref{lemma:disjoint-or-nested}}
\label{sec:proof-crefl-or}

\begin{proof}
  Consider $\mathcal{S}\in
  \mathcal{R}^{(j)}$ and $\mathcal{S}'\in \mathcal{R}^{(k)}$ for some
  $j,k\in [K]$ and $\mathcal{S} \cap \mathcal{S}' \neq \emptyset$. By
  the definition of refinement and \cref{eq:nested},
  \[
  \mathcal{S}\cap
  \mathcal{S}'
  \in \underline{R}^{\{j,k\}} \subseteq \mathcal{R}^{(j)}\cup
  \mathcal{R}^{(k)},
  \]
so there exists
  integers $m$ such that $\mathcal{S}\cap \mathcal{S}' =
  \mathcal{S}^{(j)}_m$ or $\mathcal{S}^{(k)}_{m}$. Because
  $\mathcal{R}^{(j)}$ and $\mathcal{R}^{(k)}$ are partitions, this
  means that $\mathcal{S} = \mathcal{S}^{(j)}_m$ or $\mathcal{S}' =
  \mathcal{S}^{(k)}_{m}$. In either case, $\mathcal{S} \cap
  \mathcal{S}' = \mathcal{S}$ or $\mathcal{S}'$.

  Now consider any $\bm z\in \mathcal{Z}$ and $j,k \in [K]$. Because
  $\mathcal{S}_{\bm z}^{(j)},\mathcal{S}_{\bm z}^{(k)} \in
  \mathcal{R}^{[K]}$ and $\mathcal{S}_{\bm z}^{(j)} \cap
  \mathcal{S}_{\bm z}^{(k)} \neq \emptyset$ (because they contain at
  least $\bm z$), either $\mathcal{S}_{\bm z}^{(j)}  \supseteq
  \mathcal{S}_{\bm z}^{(k)} \text{or } \mathcal{S}_{\bm z}^{(k)}
  \supseteq \mathcal{S}_{\bm z}^{(j)}$ must be true. We can order the conditioning events $\mathcal{S}_{\bm z}^{(k)},
  k\in [K]$ according to the relation $\supseteq$. Without loss of
  generality, we assume that, at $\bm z$,
\[  \mathcal{S}_{\bm z}^{(1)}\supseteq \mathcal{S}_{\bm z}^{(2)}\supseteq \cdots\supseteq  \mathcal{S}_{\bm z}^{(K-1)}\supseteq \mathcal{S}_{\bm z}^{(K)}.
\]
  Then $\bigcap_{k=1}^{K}\mathcal{S}_{\bm z}^{(k)} =\mathcal{S}_{\bm
    z}^{(K)}$ and $\bigcup_{k=1}^{K}\mathcal{S}_{\bm z}^{(k)} =
  \mathcal{S}_{\bm z}^{(1)}$. Thus the intersection and the union of
  $\{\mathcal{S}_{\bm z}^{(k)}\}_{k=1}^K$ are contained in
  $\mathcal{R}^{[K]} $ which collects all $\mathcal{S}_{\bm z}^{(k)}$
  by the definition. As this is true for all $\bm z \in \mathcal{Z}$,
  we have $\underline{\mathcal{R}}^{[K]} \subseteq
  \mathcal{R}^{[K]}$ and $\overline{\mathcal{R}}^{[K]}\subseteq
  \mathcal{R}^{[K]}$.
\end{proof}

\subsection{Proof of \Cref{lem:hasse-structure}}
\label{sec:proof-crefl-struct}

\begin{proof}

  (i) Suppose $\mathcal{S}'$, $\mathcal{S}''$ are two distinct nodes
  in $\text{ch}(\mathcal{S})$. By \Cref{lemma:disjoint-or-nested},
  they are either disjoint or nested. If they are nested, without loss
  of generality, suppose $\mathcal{S}'' \supset \mathcal{S}'$. However,
  this contradicts with the definition of the edge $\mathcal{S}
  \rightarrow \mathcal{S}'$, as by \Cref{def:hasse} there should be no
  $\mathcal{S}''$ satisfying $\mathcal{S} \supset \mathcal{S}''\supset
  \mathcal{S}'$. This shows that any two nodes in
  $\text{ch}(\mathcal{S})$ are disjoint.

  Next we show that the union of the sets in $\text{ch}(\mathcal{S})$ is
  $\mathcal{S}$. Suppose there exists $\bm z \in \mathcal{S}$ such
  that $\bm{z} \not \in \mathcal{S}'$ for all $\mathcal{S}' \in
  \text{ch}(\mathcal{S})$. In consequence, $\bm{z} \not \in
  \mathcal{S}'$ for all $\mathcal{S}' \in
  \text{de}(\mathcal{S})$. Similar to the
  proof of \Cref{lemma:disjoint-or-nested}, we can order
  $\mathcal{S}_{\bm z}^{(k)}, k\in [K]$ according to set
  inclusion. Without loss of generality, suppose
\[
    \mathcal{S}_{\bm z}^{(1)}\supseteq \mathcal{S}_{\bm
      z}^{(2)}\supseteq \cdots\supseteq  \mathcal{S}_{\bm
      z}^{(K-1)}\supseteq \mathcal{S}_{\bm z}^{(K)}.
\]
  This shows that $\mathcal{S} = \mathcal{S}_{\bm
    z}^{(K)}$, so $\text{ch}(\mathcal{S})=
  \text{de}(\mathcal{S})=\varnothing$. This contradicts the
  assumption.

  (ii)
  Consider any $\mathcal{S}\in \mathcal{R}^{[K]},$ $\mathcal{S}' \in
  \text{an}(\mathcal{S})$ and $\mathcal{S}''\in
  \text{de}(\mathcal{S})$. By definition, $\mathcal{S}' \supset
  \mathcal{S} \supset \mathcal{S}''$. Because the sets in any
  partition $\mathcal{R}^{(k)}$ are disjoint, this shows that no pairs
  of $\mathcal{S},\mathcal{S}',\mathcal{S}''$ can belong to the same
  partition $\mathcal{R}^{(k)}$. Thus, $\mathcal{K}(\mathcal{S}'),
  \mathcal{K}(\mathcal{S}), \mathcal{K}(\mathcal{S}'')$ are
  disjoint. Because this is true for any $\mathcal{S}' \in
  \text{an}(\mathcal{S})$ and $\mathcal{S}''\in
  \text{de}(\mathcal{S})$, this shows
  $\mathcal{K}(\text{an}(\mathcal{S}))$, $\mathcal{K}(\mathcal{S})$,
  and $\mathcal{K}(\text{de}(\mathcal{S}))$ are disjoint.

By the proof of (i), for any $\bm z\in \mathcal{S}$,
  $\mathcal{S}$ is in a nested sequence consisting of
  $\mathcal{S}_{\bm z}^{(1)}, \dotsc,\mathcal{S}_{\bm z}^{(K)}$. Hence,
\[\mathcal{K}(\text{an}(\mathcal{S})) \cup \mathcal{K}(\mathcal{S})
  \cup \mathcal{K}(\text{de}(\mathcal{S})) =
  \mathcal{K}(\text{an}(\mathcal{S})\cup \{\mathcal{S}\}\cup
  \text{de}(\mathcal{S})) = [K].
\]

  (iii) The first result follows from the fact that
  $\text{an}(\mathcal{S}') = \text{an}(\mathcal{S}) \cup
  \{\mathcal{S}\}$. The second
  result is true because both $\mathcal{K}
  (\{\mathcal{S}'\}\cup  \text{de}(\mathcal{S}'))$ and $\mathcal{K} (
  \text{de}(\mathcal{S}))$ are equal to $[K] \setminus
  \mathcal{K}(\text{an}(\mathcal{S}) \cup \{\mathcal{S}\})$.
\end{proof}

\subsection{Proof of \Cref{lem:p-value-cond-indep}}
\label{sec:proof-crefl-value}

\begin{proof}
  First, we claim that for any $j,k \in [K]$ and $\bm z \in
  \mathcal{Z}$, given that $\bm Z \in
  \mathcal{S}^{(j)}_{\bm z} \cap \mathcal{S}^{(k)}_{\bm z}$,
  $P^{(k)}(\bm Z, \bm W)$ only depends on $\bm Z$ through $T^{(k)}(\bm Z, \bm W)$. This is true because of the
  definition of the p-value \cref{eq:p-value} and the invariance of
  the conditioning sets, i.e., $\mathcal{S}_{\bm Z}^{(k)} =
  \mathcal{S}^{(k)}_{\bm z}$.
  Now fix $\mathcal{S}\in \mathcal{R}^{[K]}$ (which means $\mathcal{S}
  = \mathcal{S}_{\bm z}$ for some $\bm z$) and consider any $j\in
  \mathcal{K}(\mathcal{S})$. Let $\mathcal{J} =
  \mathcal{K}(\text{an}(\mathcal{S})\cup \{\mathcal{S}\})$. By
  \Cref{lemma:disjoint-or-nested}, $\mathcal{S}_{\bm z}^{(j)}\cap
  \mathcal{S}_{\bm z}^{(k)} = \mathcal{S}_{\bm z}^{(j)} =
  \mathcal{S}$ for any $k \in \mathcal{J}$. Thus, the conditional
  independence \cref{eq:cond-indep-3} implies that
  \[
    t^{(j)}(\bm Z, \bm W) \independent \left( t^{(k)}(\bm Z, \bm W)
    \right)_{k \in \mathcal{J} \setminus \{j\}} \mid
    \bm Z \in \mathcal{S}, \bm W.
  \]
 Using the claim in the previous paragraph, this verifies
  the conclusion in \Cref{lem:p-value-cond-indep}.
  \end{proof}

\subsection{Proof of \Cref{lemma:general_lemma}}
\label{sec:proof-crefl}

\begin{proof}
  Let $\psi^{(k)} = \psi^{(k)}(\bm Z, \bm W) = 1_{\{P^{(k)}(\bm Z, \bm
    W) \leq \alpha^{(k)}\}}$ for $ k \in[K]$ and rewrite the L.H.S of
  \cref{equ:general_lemma}:
  \[
    \mathbb{P}\left\{P^{(1)}(\bm Z, \bm W) \leq \alpha^{(1)},\dotsc, P^{(K)}(\bm
      Z, \bm W) \leq \alpha^{(K)} \mid \bm Z \in \mathcal{S}, \bm
      W\right\} = \mathbb{E}\Bigg\{ \prod_{k=1}^{K}  \psi^{(k)}\mid
    \bm Z\in \mathcal{S}, \bm W \Bigg\}.
  \]
  We prove \Cref{lemma:general_lemma} by induction.
  First, consider any leaf node in the Hasse diagram, that is, any
  $\mathcal{S} \in \mathcal{R}^{[K]}$ such that
  $\text{ch}(\mathcal{S})=\emptyset$. By \Cref{lem:hasse-structure},
  $\mathcal{K}(\mathcal{S}) \cup \mathcal{K}(\text{an}(\mathcal{S})) =
  [K]$. By \Cref{lem:p-value-cond-indep},
  \[
  \psi^{(j)}
  \independent \left( \psi^{(k)}  \right)_{k \in
    \mathcal{K}(\mathcal{S}) \setminus \{j\}},~\forall j \in
  \mathcal{K}(\mathcal{S}).
  \]
Using
  the validity of each CRT (\Cref{thm:valid}),
  \begin{align*}
    \mathbb{E}\Bigg\{ \prod_{k=1}^{K}  \psi^{(k)}\mid  \bm Z\in \mathcal{S}, \bm W \Bigg\}
    &  =  \mathbb{E}\left\{\prod_{k\in
      \mathcal{K}(\text{an}(\mathcal{S}))} \psi^{(k)}\mid \bm Z\in
      \mathcal{S}, \bm W\right\} \prod_{j \in
      \mathcal{K}(\mathcal{S})} \mathbb{E}\left\{ \psi^{(j)}\mid \bm
      Z\in \mathcal{S}, \bm W\right\} \\
    & \leq  \mathbb{E}\left\{  \prod_{k\in
      \mathcal{K}(\text{an}(\mathcal{S}))} \psi^{(k)}\mid \bm Z\in
      \mathcal{S}, \bm W\right\} \prod_{j \in
      \mathcal{K}(\mathcal{S})} \alpha^{(j)}.
  \end{align*}
  This is exactly \cref{equ:general_lemma} for a leaf node. Now
  consider a non-leaf node $\mathcal{S} \in \mathcal{R}^{[K]}$
  (so $\text{ch}(\mathcal{S}) \neq \emptyset$) and suppose
  \cref{equ:general_lemma} holds for any descendant of
  $\mathcal{S}$. We have
  \begin{align*}
    &\  \mathbb{E}\Bigg\{ \prod_{k=1}^{K}  \psi^{(k)}\mid  \bm Z\in
      \mathcal{S}, \bm W \Bigg\}   \\
    = &\   \mathbb{E}\left\{ \sum_{\mathcal{S}'\in
        \text{ch}(\mathcal{S})}1_{\{\bm Z\in \mathcal{S}'\}}
        \mathbb{E}\left\{ \prod_{k=1}^{K}  \psi^{(k)}\mid
       \bm{Z}\in \mathcal{S}'   , \bm W  \right\} \mid \bm Z\in
        \mathcal{S}, \bm W\right\} \qquad \quad \text{(By
        \Cref{lem:hasse-structure}\ref{equ:hasse1})} \\
    \leq &\   \mathbb{E}\left\{\sum_{\mathcal{S}'\in
           \text{ch}(\mathcal{S})}1_{\{\bm Z\in \mathcal{S}'\}}
           \mathbb{E}\left\{ \prod_{k\in
           \mathcal{K}(\text{an}(\mathcal{S}'))} \psi^{(k)} \mid
          \bm{Z}\in \mathcal{S}'        , \bm W  \right\}
           \prod_{j\in
           \mathcal{K}(\{\mathcal{S}'\}\cup\text{de}(\mathcal{S}'))}
           \alpha^{(j)} \mid \bm Z\in \mathcal{S}, \bm W\right\} \\
    & \omit \hfill \text{(By the induction hypothesis)}  \\
    = &\   \mathbb{E}\left\{ \prod_{k\in
        \mathcal{K}(\text{an}(\mathcal{S})\cup \{\mathcal{S}\})}
        \psi^{(k)}   \mid \bm Z\in \mathcal{S}, \bm W\right\}
        \prod_{j\in \mathcal{K}( \text{de}(\mathcal{S}))} \alpha^{(j)}
        \qquad \qquad \qquad \quad
        \text{(By \Cref{lem:hasse-structure}\ref{equ:hasse3})}  \\
    =&\ \mathbb{E}\left\{ \prod_{k\in  \mathcal{K}(\text{an}(\mathcal{S}))}  \psi^{(k)}\mid \bm Z\in \mathcal{S}, \bm W\right\}
       \prod_{j\in  \mathcal{K}(\mathcal{S})} \mathbb{E}\left\{
       \psi^{(j)}\mid \bm Z\in \mathcal{S}, \bm W\right\}\prod_{j\in
       \mathcal{K}( \text{de}(\mathcal{S}))}  \alpha^{(j)} \\
    & \omit \hfill \text{(By \Cref{lem:hasse-structure}\ref{equ:hasse2} and
      \Cref{lem:p-value-cond-indep})}  \\
    \leq  &\ \mathbb{E}\left\{ \prod_{k\in
            \mathcal{K}(\text{an}(\mathcal{S}))}  \psi^{(k)}\mid \bm
            Z\in \mathcal{S}, \bm W\right\} \prod_{j\in
            \mathcal{K}(\{\mathcal{S}\} \cup \text{de}(\mathcal{S}))
            } \alpha^{(j)}. \qquad \qquad \qquad \qquad \text{(By \Cref{thm:valid})}
  \end{align*}
  By induction, this shows that \cref{equ:general_lemma} holds for all
  $\mathcal{S} \in \mathcal{R}^{[K]}$.
\end{proof}

\subsection{Proof of \Cref{thm:general-3}}
\label{sec:proof-crefthm:g-3}

\begin{proof}
  \Cref{lemma:disjoint-or-nested} shows that
  $\overline{\mathcal{R}}^{[K]} \subseteq \mathcal{R}^{[K]}$, thus
  \cref{equ:general_lemma} holds for every
  $\mathcal{S}\in \overline{\mathcal{R}}^{[K]} \subseteq
  \mathcal{R}^{[K]}$. Moreover, any $\mathcal{S}\in
  \overline{\mathcal{R}}^{[K]} $ has no superset in
  $\mathcal{R}^{[K]}$ and thus has no ancestors in the
  Hasse diagram. This means that the right-hand side of
  \cref{equ:general_lemma} is simply
  $\prod_{k=1}^{K}\alpha^{(k)}$. Because $\overline{\mathcal{G}}^{[K]}$ is
  the $\sigma$-algebra generated by the sets in $\overline{\mathcal{R}}^{[K]}$,
  equation \cref{equ:dominance-conditional-3} holds. Finally,
  equation \cref{equ:dominance-3} holds trivially by taking the
  expectation of \cref{equ:dominance-conditional-3} over
  $\overline{\mathcal{G}}^{[K]}$.
\end{proof}

\subsection{Proof of \Cref{thm:combine} in \Cref{sec:combine}}\label{proof:combine}

We denote the units used in the $k$-th test by $\mathcal{I}^{(k)} = \mathcal{I}_0^{(k)}\cup \mathcal{I}_1^{(k)}$. We denote the treatment variables and outcomes used in the $k$-th test  by
$\bm A^{(k)} = (A_i : i \in \mathcal{I}^{(k)}) $ and $\bm Y^{(k)} = (Y_{i}^{(k)}: i \in \mathcal{I}^{(k)})$, respectively. We define the difference-in-means statistics in the $k$-th test as
\begin{equation}\label{equ:dims}
t(\bm A^{(k)}, \bm Y^{(k)}) =\sqrt{N} (\bar{Y}_{1}^{(k)} - \bar{Y}_{0}^{(k)}),
\end{equation}
where the average outcomes are given by
\[
\bar{Y}_{1}^{(k)} = (N_{1}^{(k)})^{-1}\sum_{i\in \mathcal{I}^{(k)}}A_{i}^{(k)}Y_{ik}  \quad \text{and}\quad  \bar{Y}_{0}^{(k)} = (N_{0}^{(k)})^{-1}\sum_{i\in \mathcal{I}^{(k)}}(1-A_{i}^{(k)})Y_{ik}.
\]
Let $\bm W^{(k)} = \big(Y_{i}^{(k)}(0), Y_{i}^{(k)}(1):i\in \mathcal{I}^{(k)}   \big)$. The randomization distribution in the $k$-th test is
\[
  \hat{G}^{(k)}(b^{(k)}) = \mathbb{P}^*\left\{t\big(\bm A_{*}^{(k)},\bm Y^{(k)}(\bm A_{*}^{(k)})\big) \leq b^{(k)} \mid  \bm W^{(k)}  \right\},
  \]
where $\bm A_{*}^{(k)}$ is a permutation of  $\bm A^{(k)}$, i.e., $\bm A_{*}^{(k)}$ is drawn from the same uniform assignment distribution  as $\bm A^{(k)}$. Under assumption (i), using the bivariate Central Limit Theorem,
\[
\Big(t\big(\bm A^{(k)}, \bm Y^{(k)}\big), t\big(\bm A_{*}^{(k)},\bm Y^{(k)}(\bm A_{*}^{(k)}) \big)\Big)\xrightarrow{d} \big(T_{\infty}^{(k)},T_{\infty,*}^{(k)} \big),
\]
where $T_{\infty}^{(k)}$ and $T_{\infty,*}^{(k)}$ are independent, each with a common c.d.f $G^{(k)}(\cdot).$ By \citet[Theorem 15.2.3]{lehmann2006testing},
$ \hat{G}^{(k)}(b^{(k)}) \xrightarrow{P} G^{(k)}(b^{(k)})$
for every $b^{(k)}$ which is a continuity point of $G^{(k)}(\cdot)$.

Under assumptions (i), (ii) and the zero-effect null hypothesis
$H_0^{(k)}$, using the result in \citet[Theorem 15.2.5]{lehmann2006testing}, we have
\begin{equation}\label{equ:need_proof}
T_{\infty}^{(k)} \sim g_{0}^{(k)} = \mathcal{N}(0,V_{\infty}^{(k)}),
\end{equation}
where the variance $V_{\infty}^{(k)}$ takes the form
\begin{equation}\label{equ:need_proof}
    V_{\infty}^{(k)} = 1/\Lambda^{(k)}=\frac{N}{N_0^{(k)}} \Var\big[Y^{(k)}(1)\big] + \frac{N}{N_1^{(k)}} \Var\big[Y^{(k)}(0)\big].
\end{equation}
This expression of the asymptotic variance can be found in
\citet[Section 6.4]{imbens2015causal}. For completeness, an
alternative derivation is provided in \Cref{proof:lehmann}.
Under assumptions  (i), (ii) and the constant effect alternative hypothesis
$H_{1}^{(k)}$ (with $\tau = h/\sqrt{N}$),
\begin{equation}
T_{\infty}^{(k)} \sim g_{1}^{(k)} = \mathcal{N}( h,V_{\infty}^{(k)}).
\end{equation}
The c.d.f of $T_{\infty}^{(k)}$ under $H_{0}^{(k)}$ and $H_{1}^{(k)}$ are given by
\[
G_{0}^{(k)}(b^{(k)}) = \Phi\left(b^{(k)}/\sqrt{V_{\infty}^{(k)}}\right)\quad \text{and} \quad G_{1}^{(k)}(b^{(k)}) = \Phi\left(\Big[b^{(k)}-h\Big]/\sqrt{V_{\infty}^{(k)}} \right).
\]
Let $\tilde{b}^{(k)}= \big[ G_{0}^{(k)}\big]^{-1}(p^{(k)})$. The p-value density function under $H_{1}^{(k)}$ can be rewritten as
\[
  f_{1}^{(k)}(p^{(k)}) = g_{1}^{(k)}\big(\tilde{b}^{(k)}\big)\left|\frac{d \tilde{b}^{(k)}}{d p^{(k)}} \right| = g_{1}^{(k)}(\tilde{b}^{(k)})\left|\bigg( \big[G_{0}^{(k)}\big]'(\tilde{b}^{(k)})\bigg)^{-1}\right| = g_{1}^{(k)}(\tilde{b}^{(k)})/ g_{0}^{(k)}(\tilde{b}^{(k)}).
\]
Since the p-values from the permutation tests follow a standard uniform distribution under the null,
the log-likelihood ratio of the p-values $P^{(1)}\dotsc, P^{(K)}$ is given by
\[
\sum_{k=1}^{K} \log \left[ f_{1}^{(k)}\big(p^{(k)}\big) \right] = \sum_{k=1}^{K} \log \left[ \frac{g_{1}^{(k)}(\tilde{b}^{(k)})}{ g_{0}^{(k)}(\tilde{b}^{(k)})} \right] \propto \sum_{k=1}^{K} \sqrt{\Lambda^{(k)}} \Phi^{-1}(p^{(k)}).
\]
Using the p-value weights $\Lambda^{(k)},k\in[K],$ from the log-likelihood ratio, we obtain
\begin{equation}\label{equ:weighted_t}
T_\infty = \sum_{k=1}^{K} w_{\infty}^{(k)} \Phi^{-1}(P^{(k)}) \quad\text{where}\quad w_\infty^{(k)}  = \sqrt{\frac{\Lambda^{(k)}}{\sum_{j=1}^{K}\Lambda^{(j)}} }.
\end{equation}

\subsection{Proof of \cref{equ:need_proof} in \Cref{proof:combine}}\label{proof:lehmann}
\citet[Theorem 15.2.3]{lehmann2006testing} uses the fact that under assumptions (i) and (ii), $V_{\infty}^{(k)}= \Var[T\big(\bm A^{(k)}, \bm Y^{(k)}\big)]$, but leaves the  calculation of $\Var[T\big(\bm A^{(k)}, \bm Y^{(k)}\big)]$ (their Equation 15.15) as an exercise for readers. Here we provide the calculation to complete our proof for
\cref{equ:need_proof}.
To simplify the exposition, we let $m= N_1^{(k)}$, $n=N_0^{(k)}$.
Suppose that $\mathcal{I}^{(k)} = [m+n]$,
$\mathcal{I}_1^{(k)} = [m], \mathcal{I}_0^{(k)} = \{m+1,\dotsc,m+n\}$ and
\[
\bm Y^{(k)} = (\underbrace{Y_{1}^{(k)},\cdots,Y_{m}^{(k)}}_\text{treated outcomes},\underbrace{Y_{m+1}^{(k)},\dotsc,Y_{m+n}^{(k)}}_\text{control outcomes}).
\]
Let $\Pi = \big[\Pi(1),\dotsc,\Pi(m+n) \big]$ be an independent random permutation of $1,\dotsc,m+n.$
We rewrite the difference-in-means statistics \cref{equ:dims} as
\[
T := t\big(\bm A^{(k)}, \bm Y^{(k)}\big) =   \frac{\sqrt{N}}{m} \sum_{i=1}^{m+n}E_i Y_{i}^{(k)},\text{ where }
E_i =\begin{cases}
        1 & \text{if }   \Pi(i)\leq m \\
        -m/n & \text{otherwise}
\end{cases}
, \forall i\in [m+n].
\]
Let $D$ be the number of $i\leq m$ such that $\Pi(i)\leq m $. The value of $T$ is determined by $m-D$, the number of units swapped between the treated group ($i=1,\dotsc m$) and control group ($i=m+1,\dotsc, m+n$). Since all the treated (control) units have the same outcome variance, it does not  matter which units are swapped in computing the variance of $T$.
We first consider the case that $m\leq n$. Using the expectation of the hypergeometric distribution,
\[
\mathbb E[D] = \sum_{d=0}^{m}\frac{{m \choose d}{n \choose m-d          }}{             {m+n \choose m          }               } d = \frac{m^2}{m+n}.
\]
Let $\sigma_1^2 = \Var[Y^{(k)}(1)]$ and $\sigma_0^2 = \Var[Y^{(k)}(0)].$ The variance of $T$ is given by
\begin{align*}
\Var(T)& =\frac{N}{m^2}\sum_{d=0}^{m}\frac{{m \choose d}{n \choose m-d          }}{             {m+n \choose m}}\left[  d\sigma_1^2 + (m-d)\sigma_0^2+(m-d) \frac{m^2}{n^2}\sigma_1^2   + (n-m+d)               \frac{m^2}{n^2} \sigma_0^2              \right] \\
&=\frac{N}{m^2}\left[   \frac{m^2}{m+n}\sigma_1^2 + \frac{mn}{m+n}\sigma_0^2+\frac{mn}{m+n} \frac{m^2}{n^2}\sigma_1^2   + \frac{m^2}{m+n}               \sigma_0^2              \right]\\
&=\frac{N}{m^2}\left[   \frac{m^2}{n}\sigma_1^2 + m\sigma_0^2   \right]\\
&=      \frac{N}{n}\sigma_1^2 + \frac{N}{m} \sigma_0^2.
\end{align*}
If $m>n$, we let $D$ denote the number of $i\in \{m+1,\dotsc,m+n\}$ such that $\Pi(i) \in \{m+1,\dotsc,m+n\}$. Then,
$\mathbb E[D] = \frac{n^2}{m+n}$, \text{ and }
\begin{align*}
\Var(T)& =\frac{N}{m^2}\sum_{d=0}^{n}\frac{{m \choose d}{n \choose m-d          }}{             {m+n \choose n}}\left[  d \frac{m^2}{n^2}\sigma_0^2 + (n-d)\frac{m^2}{n^2} \sigma_1^2+(n-d) \sigma_0^2  + (m-n+d)               \sigma_1^2              \right] \\
&=\frac{N}{m^2}\left[   \frac{m^2}{m+n}\sigma_0^2 + \frac{mn}{m+n}\frac{m^2}{n^2}\sigma_1^2+\frac{mn}{m+n} \sigma_0^2   + \frac{m^2}{m+n}               \sigma_1^2              \right]\\
&=\frac{N}{m^2}\left[   \frac{m^2}{n}\sigma_1^2 + m\sigma_0^2   \right]\\
&=      \frac{N}{n}\sigma_1^2 + \frac{N}{m} \sigma_0^2.
\end{align*}
The variance is the same for $m\leq n$ and $m>n$. This proves our claim and Equation 15.15 in \citet{lehmann2006testing}: $\Var(m^{-1/2} \sum_{i=1}^{m+n}E_iY_{i}^{(k)} ) = \Var(\sqrt{\frac{m}{N}}T)= \frac{m}{n}\sigma_1^2 +  \sigma_0^2.$

\subsection{Proof of \Cref{lemma:p_hat} in \Cref{sec:combine}}\label{proof:p_hat}
In \Cref{alg:mcrts}, the time steps in $\mathcal{C}_j$ define a sequence of nested permutation tests. For example, $\mathcal{C}_1 = \{1,3,5,7\}$
defines CRTs 1,3,5 and $\mathcal{C}_2 = \{2,4,6,8\}$ defines CRTs 2,4,6 in
 \Cref{fig:swd-crt}. Suppose that we only have one subset $\mathcal{C}=\{c_1,\dotsc,c_{K} \}$ consists of $K$ time points.

 We next prove that the tests defined on $\mathcal{C}$ are valid to combine. It is suffices to show that the test statistics  $\hat{T} = \sum_{k=1}^{K} \hat{w}^{(k)} \Phi^{-1}(P^{(k)}) $ stochastically dominates the random variable
\[
\tilde{T} :=  \sum_{k=1}^{K} \hat{w}^{(k)} \Phi^{-1}(U^{(k)}) \sim \mathcal{N}\left(0,\sum_{k=1}^{K} \big[\hat{w}^{(k)} \big]^{2}\right) = \mathcal{N}(0,1).
\]
where each $U^{(k)}$ is a standard uniform. The second equality is achieved by the definition of $\hat{w}^{(k)},k\in [K].$

By conditioning on the potential outcomes $(\bm W^{(k)},k\in [K])$, the weights $\hat{w}^{(k)},k\in [K],$ are fixed. The derivation follows the same steps as  \Cref{prop:nested} (the conditioning on $\bm W$'s is suppressed). By construction, $c_1<\dotsc<c_{K}$ and the conditioning sets $\mathcal{S}^{(1)}\supseteq \cdots \supseteq \mathcal{S}^{(K)}$ for all $\bm z\in \mathcal{Z}$. Then, the corresponding $\sigma$-algebras generated by the conditioning sets satisfy that
$ \mathcal{G}^{(1)} \subseteq \cdots \subseteq \mathcal{G}^{(K)}$. Conditioning on $\mathcal{G}^{(k')}$, the term $\sum_{k=1}^{k'-1}\hat{w}^{(k)}\Phi^{-1}(P^{(k)})$ is fixed. By the law of iterated expectation,
\begin{align*}
 \mathbb{E}\left[               1\left\{\hat{T}\leq b   \right\}    \right]=\  & \mathbb{E}\left[1\left\{ \hat{w}^{(K)}  \Phi^{-1}(P^{(K)})\leq b -\sum_{k=1}^{K-1}\hat{w}^{(k)}\Phi^{-1}(P^{(k)})     \right\} \right] \\
   =\ &  \mathbb{E}\left( \mathbb{E}\left[1{\left\{ \hat{w}^{(K)} \Phi^{-1}(P^{(K)})\leq b -\sum_{k=1}^{K-1}\hat{w}^{(k)}\Phi^{-1}(P^{(k)})   \right\}} \ \Big|\ \mathcal{G}^{(K)} \right]\right)     \\
   \leq\ & \mathbb{E}_{U^{(K)}}\left( \mathbb{E}\left[1 \left\{ \hat{w}^{(K)}  \Phi^{-1}(U^{(K)})\leq b -\sum_{k=1}^{K-1}\hat{w}^{(k)}\Phi^{-1}(P^{(k)})       \right\} \ \Big|\ U^{(K)}  \right]   \right)    \\
 =\ & \mathbb{E}_{U^{(K)}}\bigg( \mathbb{E}\bigg[1 \bigg\{ \hat{w}^{(K-1)} \Phi^{-1}(P^{(K-1)}) \leq b -\sum_{k=1}^{K-2}\hat{w}^{(k)}\Phi^{-1}(P^{(k)})  \\
 &  \hspace{150pt} - \hat{w}^{(K)} \Phi^{-1}(U^{(K)})     \bigg\} \ \Big|\ \mathcal{G}^{(K-1)}, U^{(K)}  \bigg]  \bigg)   \\
 \leq\ & \mathbb{E}_{U^{(K-1)},U^{(K)}}\bigg( \mathbb{E}\bigg[1 \bigg\{ \hat{w}^{(K-1)} \Phi^{-1}(U^{(K-1)}) \leq b -\sum_{k=1}^{K-2}\hat{w}^{(k)}\Phi^{-1}(P^{(k)}) \\
& \hspace{150 pt} - \hat{w}^{(K)} \Phi^{-1}(U^{(K)})      \bigg\} \ \Big|\ U^{(K-1)},U^{(K)}  \bigg]  \bigg)      \\
 &  \hspace{-10pt} \vdots \\
\leq \ & \mathbb{E}_{U^{(1)},\dotsc,U^{(K)}}\bigg( \mathbb{E}\bigg[1 \bigg\{ \sum_{k=1}^{K}\hat{w}^{(k)}\Phi^{-1}(U^{(k)}) \leq b  \bigg\} \ \Big|\ U^{(1)},\dotsc,U^{(K)}  \bigg]  \bigg)\\
     =\  & \mathbb{E}\left[ 1{\left\{ \tilde{T}  \leq b \right\}} \right].
\end{align*}
The inequalities are attained by the validity of the p-values $P^{(1)},\dotsc,P^{(K)}$, i.e., each $P^{(k)}$ stochastically dominates the standard uniform variable $U^{(k)}$. Since $\hat{T}$ stochastically dominates the standard normal random variable $\tilde T$,
\begin{equation}\label{equ:combin_p}
\mathbb P\{\hat P \leq \alpha  \} = \mathbb P\{\Phi(\hat T) \leq \alpha  \}  \leq \mathbb P\{\Phi(\tilde T) \leq \alpha  \}= \alpha
\end{equation}
The last equality is achieved by the fact that $\Phi(\tilde T)$ is a standard uniform random variable.
Suppose that \Cref{alg:mcrts} creates multiple subsets $\mathcal{C}_j,j\in [J].$ Applying the same proof to the tests defined on each $\mathcal{C}_j$,
\[
 \mathbb{E}\left[               1\left\{\hat{T}_j\leq b \right\}    \right] \leq  \mathbb{E}\left[              1\left\{\tilde{T}_j\leq b       \right\}    \right].
\]
where $\hat{T}_j=\sum_{c\in \mathcal{C}_j}\hat{w}^{(c)} \Phi(P^{(c)})$ and
$\tilde{T}_j=\sum_{c\in \mathcal{C}_j}\hat{w}^{(c)} \Phi(U^{(c)})\sim \mathcal{N}\left(0,\sum_{c\in \mathcal{C}_j} \big[\hat{w}^{(c)} \big]^{2}\right)$.
By splitting the time steps, we make sure that
 the p-values from the tests based on different $\mathcal{C}_j$ can be combined. The test statistics
 $\hat{T} = \sum_{j\in\mathcal J} \hat{T}_j $ stochastically dominates the random variable
 \[
 \tilde{T} = \sum_{j\in\mathcal J} \tilde{T}_j\sim \mathcal{N}\bigg(0,\sum_{j\in \mathcal{J}}\sum_{c\in \mathcal{C}_j} \big[\hat{w}^{(c)} \big]^{2} \bigg)
 =  \mathcal{N}(0,1),
 \]
 which implies that \cref{equ:combin_p} still holds for the combined p-value $\hat P  = \Phi( \hat{T})$.

% \section{Further experimental details on Trials III and IV}\label{appenx:trial_details}

% The primary outcome, patients' (log) length of stay in the hospital, is given by the logarithm of the variable \texttt{acute\_los} in the dataset.  \citet{haines2017impact} indexed the wards by A,B,C,D,E,F in the order of removing (starting) the old (new) service. We note that the order corresponds to the variable \texttt{index\_ward} $= 5,1,6,2,4,3$ in their dataset. The trials consist of 14,834 and 12,674 patients (units) distributed across different months of the study, respectively. Every patient lived in a ward for less than a month so only has one observed outcome over time.

% Suppose that ward $t$ removed the old services at the $t$-th month, MCRTs+ tests the lag $l$ effect by permuting the outcomes between the patients who lived in ward $t$ at times $t$ and $t+l$. The randomness we use in different wards is independent. For $l=1,\dotsc,6$, we can estimate the lag $l$ effect by inverting a combination of $6-(l-1)$ permutation tests from different wards. The mixed-effects model used in the experiment has 6 ward effect parameters and 6 lagged treatment effect parameters.

%%% Local Variables:
%%% mode: latex
%%% TeX-master: t
%%% End:

\end{appendices}

% \clearpage

% \appendix

% %\renewcommand{\appendixpagename}{Supplementary Materials for ``Multiple conditional randomization tests''}
% %\begin{appendices}
% \input{Appendix}
% %\end{appendices}

\end{document}